\theoremstyle{definition}
\newtheorem{theorem}{Theorem}[section]
\newtheorem{lemma}[theorem]{Lemma}
\newtheorem{proposition}[theorem]{Proposition}
\newtheorem{example}[theorem]{Example}
\newtheorem{definition}[theorem]{Definition}
\newcommand{\mb}{\mathbb}
\newcommand{\mc}{\mathcal}
\newcommand{\eul}{\mathfrak}
\newcommand{\bou}{_{\scriptscriptstyle{\rm b}}}
\newcommand{\A}{\eul A}
\newcommand{\Ao}{{\eul A}_{\scriptscriptstyle 0}}
\newcommand{\Bo}{{\eul B}_{\scriptscriptstyle 0}}
\newcommand{\vp}{\varphi}
\newcommand{\Hil}{{\mc H}}
\newcommand{\D}{{\mc D}}
\def\x{\relax\ifmmode {\mbox{*}}\else*\fi}
\newcommand{\id}{\mathbbm{1}}
\newcommand{\ip}[2]{\left\langle{#1}|{#2}\right\rangle}
\newcommand{\ad}{^{\mbox{\scriptsize $\dag$}}}
\newcommand{\LDH}{{\mathcal L}\ad(\D,\Hil)}
\newcommand{\LDHpi}{{\mathcal L}\ad(\D_{\scriptscriptstyle\pi},\Hil_{\scriptscriptstyle\pi})}
\newcommand{\SSA}{{\mathcal S}_{\Ao}(\A)}
\newcommand{\SSHone}{{\mathcal S}_{\Ao}(\H_1)}
\newcommand{\SSHtwo}{{\mathcal S}_{\Bo}(\H_2)}
\newcommand{\rep}{{\mc R}(\A,\Ao)}
\newcommand{\crep}{{\mc R}_c(\A,\Ao)}
\newcommand{\normo}[1]{\|#1\|_{\scriptscriptstyle0}}
\def\H{{\mathcal H}}
\newcommand{\wmult}{\mbox{\raisebox{1pt}{$\scriptscriptstyle{
\square}$}}}
\numberwithin{equation}{section}
\numberwithin{equation}{section}
\begin{document}

\title[]{On some applications of representable and continuous functionals of Banach quasi *-algebras}
\author{Maria Stella Adamo}
\address{Dipartimento di Matematica, Universit\`a di Roma ``Tor Vergata'', I-00174 Roma, Italy}
\email{adamo@axp.uniroma2.it; msadamo@unict.it}
\keywords{Representable functionals, Banach and Hilbert quasi *-algebras, Weak derivations on Banach quasi *-algebras, Tensor product of Hilbert quasi *-algebras}
\subjclass[2010]{Primary 46L08; Secondary 46A32, 46L57, 46L89, 47L60}

\begin{abstract} This survey aims to highlight some of the consequences that representable (and continuous) functionals have in the framework of Banach quasi *-algebras. In particular, we look at the link between the notions of *-semisimplicity and full representability in which representable functionals are involved. Then, we emphasize their essential role in studying *-derivations and representability properties for the tensor product of Hilbert quasi *-algebras, a special class of Banach quasi *-algebras.
\end{abstract}

\maketitle

\section{Introduction and preliminaries}

The investigation of (locally convex) quasi *-algebras was undertaken around the beginning of the '80s, in the last century, to give a solution to specific problems concerning quantum statistical mechanics and quantum field theory, that required instead a representation of observables as \textit{unbounded} operators, see, e.g., \cite{Bag6,ct2}. They were introduced by G. Lassner in the series of papers \cite{las} and \cite{las1} in 1988.

A particular interest has been shown for the theory of *-rep\-re\-sen\-ta\-tions of quasi *-algebras in a specific family of unbounded densely defined and closable operators. In this framework, a central role is played by \textit{representable functionals}, i.e., those functionals that admit a GNS-like construction. In the process of looking at structural properties of locally convex quasi *-algebras, *-semisimplicity and full representability are the critical notions involved, making an extensive use of representable (and continuous) functionals \cite{AT,Ant1,Frag2,Trap1}.

The goal of this survey is to point out some of the various connections that representable and continuous functionals for Banach quasi *-algebras have and examine their applications, e.g., the study of unbounded *-derivations and tensor products. The importance of investigating these themes stem from the study of physical phenomena. Moreover, very little is known about unbounded *-derivations and tensor products in the framework of unbounded operator algebras. The reader is referred to as \cite{fiw,fiw1,hei,WZ1,WZ2,WZ3}.

Banach quasi *-algebras constitute a particular subclass of locally convex *-algebras. In this context, the interesting question is to understand whether representable functionals are continuous. The reasons to get to know about the continuity of these functionals are several. Among them, the continuity is a crucial feature for representable functionals, since it would reflect on the sesquilinear forms and the *-representations associated with them through the GNS-like construction (see \cite{Trap1}). Furthermore, no example of representable functional that is \textit{not continuous} is known in the literature. 

A positive answer to this question has been given for the space $L^2(I,d\lambda)$, where $I=[0,1]$ and $\lambda$ is the Lebesgue measure, over continuous or essentially bounded functions, and more in general, for commutative \textit{Hilbert} quasi *-algebras under certain conditions. In the case of $L^p$-spaces for $p\geq1$, we observe a discontinuous behaviour in the quantity of representable and continuous functional when $p\geq1$ gets bigger. For $p\geq2$, the $L^p$-spaces turn out to be \textit{fully representable} and \textit{*-semisimple} Banach quasi *-algebras and in this case these notions \textit{coincide} (see Section \ref{Sec2}, \cite{AT,Ant1,Bag5,Bag7,Frag2}). 

Hilbert quasi *-algebras constitute a class of *-semisimple and fully representable quasi *-algebras. In this case, representable and continuous functionals are in 1-1 correspondence with weakly positive and bounded elements. This result allows us to get the representability for the tensor product of two representable and continuous functionals in the framework of tensor product of Hilbert quasi *-algebras (refer to \cite{adamo,AF}).

*-Semisimplicity allows us to define a proper notion of *-derivation in the case of Banach quasi *-algebras and prove a result characterizing infinitesimal generators of one-parameter group of *-automorphisms in the Banach quasi *-algebras case, extending results of Bratteli - Robinson for C*-algebras (see, e.g., \cite{Brat2}).

The survey is structured as follows. Firstly, we give some preliminaries about (Banach) quasi *-algebras, representable (and continuous) functionals and the GNS-construction. For these quasi *-algebras, in Section \ref{Sec2} we recall the notions of *-semisimplicity and full representability, summing up the main results about their link in the Banach case and the characterization we have for Hilbert quasi *-algebras. In Section \ref{Sec3}, we concentrate on the case of *-semisimple Banach quasi *-algebras and deal with weak *-derivations and related results. In the last section, we look at the construction of the tensor product Hilbert quasi *-algebra and look at the properties of tensor products of representable functionals and the sesquilinear forms involved in the definition of *-semisimplicity.

For the reader's convenience, we recall some preliminary notions for future use. Further details can be found in \cite{Ant1}.

\begin{definition}
A \textit{quasi *-algebra} $(\A,\Ao)$ (or over $\Ao$) is a pair consisting of a vector space $\A$ and a *-algebra $\Ao$ contained in $\A$ as a subspace and such that
\begin{itemize}
\item[(i)] the left multiplication $ax$ and the right multiplication $xa$ of an element $a\in\A$ and $x\in\Ao$ are always defined and bilinear;
\item[(ii)] $(xa)y=x(ay)$ and $a(xy)=(ax)y$ for each $x,y\in\Ao$ and $a\in\A$;
\item[(iii)] an involution $\ast$, which extends the involution of $\Ao$, is defined in $\A$ with the property $(ax)^*=x^*a^*$ and $(xa)^*=a^*x^*$ for all $a\in\A$ and $x\in\Ao$.
\end{itemize}
\end{definition}

We say that a quasi *-algebra $(\A,\Ao)$ has a \textit{unit}, if there is an element in $\Ao$, denoted by $\id$, such that $a\id=a=\id a$ for all $a\in\A$. If a unit exists, then it is always unique.

\begin{definition}\label{fun_repr}
	Let $(\A,\Ao)$ be a quasi *-algebra. A linear functional $\omega:\A\to\mathbb{C}$ satisfying
	\begin{itemize}
		\item[(L.1)] $\omega(x^*x)\geq0$ for all $x\in\Ao$;
		\item[(L.2)] $\omega(y^*a^*x)=\overline{\omega(x^*ay)}$ for all $x,y\in\Ao$, $a\in\A$;
			\item[(L.3)] for all $a\in\A$, there exists $\gamma_a>0$ such that
			$$|\omega(a^*x)|\leq\gamma_a\omega(x^*x)^{\frac12},\quad\forall x\in\Ao,$$
	\end{itemize}
is called \textit{representable} on the quasi *-algebra $(\A,\Ao)$.

The family of all representable functionals on the quasi *-algebra $(\A,\Ao)$ will be denoted by $\mathcal{R}(\A,\Ao)$.
\end{definition}

This definition is justified by the following Theorem \ref{thm_repr}, proving the existence of a GNS-like construction of a *-representation $\pi_\omega$ and a Hilbert space $\mathcal{H}_{\omega}$ for a representable functional $\omega$ on $\A$. 
\smallskip

Let $\Hil$ be a Hilbert space and let $\D$ be a dense linear subspace of $\Hil$. We denote by $\LDH$ the following family of closable operators:
$$\LDH=\left\{X:\D\to\Hil:\mathcal{D}(X)=\D, \mathcal{D}(X^{\ast})\supset\D\right\}.$$
$\LDH$ is a $\mathbb{C}-$vector space with the usual sum and scalar multiplication. If we define the involution $\dagger$ and partial multiplication $\wmult$ as
$$X\mapsto X^{\dagger}\equiv X^{\ast}\upharpoonright_{\D}\quad\text{and}\quad X\wmult Y=X^{\dagger\ast}Y,$$
then $\LDH$ is a partial *-algebra defined in \cite{Ant1}.
\smallskip

\begin{definition}
	A {\it *-representation} of a quasi *-algebra $(\A,\Ao)$ is a *-homomorphism $\pi:\A\to\LDHpi$, where $\D_{\scriptscriptstyle\pi}$ is a dense subspace of the Hilbert space $\Hil_{\scriptscriptstyle\pi}$, with the following properties:
	\begin{itemize}
		\item[(i)] $\pi(a^{\ast})=\pi(a)^{\dagger}$ for all $a\in\A$;
		\item[(ii)] if $a\in\A$ and $x\in\Ao$, then $\pi(a)$ is a left multiplier of $\pi(x)$ and $\pi(a)\wmult\pi(x)=\pi(ax)$.
	\end{itemize}
\end{definition}

A *-representation $\pi$ is 
\begin{itemize}
	\item {\em cyclic} if $\pi(\Ao)\xi$ is dense in $\Hil_{\scriptscriptstyle\pi}$ for some $\xi\in\D_{\scriptscriptstyle\pi}$;
	\item {\em closed} if $\pi$ coincides with its closure $\widetilde{\pi}$ defined in \cite[Section 2]{Trap1}.
\end{itemize}
If $(\A,\Ao)$ has a unit $\id$, then we suppose that $\pi(\id)=I_\D$, the identity operator of $\D$.
\smallskip

\begin{theorem}\cite{Trap1}\label{thm_repr}
	Let $(\A, \Ao)$ be a quasi *-algebra with unit
	$\id$ and let $\omega$ be a linear functional on $(\A, \Ao)$ that satisfies the conditions (L.1)-(L.3) of Definition \ref{fun_repr}. Then, there exists a closed cyclic *-representation ${\pi}_\omega$ of $(\A,\Ao)$, with cyclic vector $\xi_\omega$ such that
	$$ \omega(a)=\ip{{\pi}_\omega(a)\xi_\omega}{\xi_\omega},\quad \forall a\in \A.$$
	This representation is unique up to unitary equivalence.
\end{theorem}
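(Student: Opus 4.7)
The plan is the standard GNS-type construction adapted to the quasi *-algebra setting, with condition (L.3) as the substitute for boundedness that lets us define $\pi_\omega(a)$ even when $a\notin\Ao$.

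First I would equip $\Ao$ with the sesquilinear form $\langle x\mid y\rangle_\omega:=\omega(y^*x)$, which is positive semidefinite by (L.1). Let $N_\omega:=\{x\in\Ao:\omega(x^*x)=0\}$; using Cauchy--Schwarz together with (L.3) one checks that $N_\omega$ is a left ideal of $\Ao$ (the key point being that $|\omega((yx)^*(yx))|$ is controlled via (L.3) applied to $a=y^*yx\in\A$). Then $\lambda_\omega:\Ao\to\Ao/N_\omega$ gives a pre-Hilbert space, whose completion I call $\Hil_\omega$, with $\D_\omega:=\lambda_\omega(\Ao)$ as a dense subspace. The candidate cyclic vector is $\xi_\omega:=\lambda_\omega(\id)$.

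Next I would define the *-representation. For $a\in\A$ and $x\in\Ao$, consider the antilinear functional
\[
\lambda_\omega(y)\;\longmapsto\;\omega(y^*ax),\qquad y\in\Ao.
\]
Since $ax\in\A$, condition (L.3) gives $|\omega((ax)^*y)|\le\gamma_{ax}\,\omega(y^*y)^{1/2}$, and (L.2) converts this into a bound on $|\omega(y^*ax)|$ in terms of $\|\lambda_\omega(y)\|_\omega$. By Riesz there is a unique vector in $\Hil_\omega$, which I call $\pi_\omega(a)\lambda_\omega(x)$, satisfying
\[
\langle \pi_\omega(a)\lambda_\omega(x)\mid \lambda_\omega(y)\rangle_\omega=\omega(y^*ax),\qquad \forall y\in\Ao.
\]
Well-definedness in $x$ is exactly the statement that if $x-x'\in N_\omega$ then $\omega(y^*a(x-x'))=0$, which again follows from (L.2)+(L.3) applied to $ay\in\A$. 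Linearity in $x$ and in $a$ is automatic. In particular, for $x\in\Ao$ a direct check gives $\pi_\omega(x)\lambda_\omega(y)=\lambda_\omega(xy)$, so $\pi_\omega(\Ao)\D_\omega\subset\D_\omega$.

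Then I would verify the algebraic properties required of a *-representation. Using (L.2),
\[
\langle \pi_\omega(a)\lambda_\omega(x)\mid\lambda_\omega(y)\rangle_\omega=\omega(y^*ax)=\overline{\omega(x^*a^*y)}=\langle\lambda_\omega(x)\mid\pi_\omega(a^*)\lambda_\omega(y)\rangle_\omega,
\]
which simultaneously shows $\D_\omega\subset\mathcal{D}(\pi_\omega(a)^*)$, so $\pi_\omega(a)\in\LDH$ with $\D=\D_\omega$, $\Hil=\Hil_\omega$, and identifies $\pi_\omega(a)^\dagger=\pi_\omega(a^*)$. The partial-multiplication axiom $\pi_\omega(a)\wmult\pi_\omega(x)=\pi_\omega(ax)$ for $a\in\A$, $x\in\Ao$ reduces, after testing against $\lambda_\omega(z)$, to the identity $\omega(z^*a(xy))=\omega(z^*(ax)y)$, which is exactly axiom (ii) of a quasi *-algebra. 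The GNS identity $\omega(a)=\langle\pi_\omega(a)\xi_\omega\mid\xi_\omega\rangle_\omega$ is then immediate by taking $x=y=\id$ in the defining relation.

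Finally, cyclicity of $\xi_\omega$ follows from $\pi_\omega(\Ao)\xi_\omega=\D_\omega$, which is dense. Passing to the closure $\widetilde\pi_\omega$ in the sense of \cite{Trap1} gives a closed cyclic *-representation without altering the identity for $\omega$. For uniqueness, given another triple $(\pi',\xi',\Hil')$ with the same property, the map $\pi_\omega(x)\xi_\omega\mapsto\pi'(x)\xi'$ preserves inner products (both compute $\omega(y^*x)$) and hence extends to a unitary $U:\Hil_\omega\to\Hil'$ that intertwines the two representations on the cyclic dense subspaces, and by closedness on their closures. The main delicate point in all of this is the very first use of (L.3): establishing that $\pi_\omega(a)$ is a well-defined operator on $\D_\omega$ (and not merely on some $a$-dependent quotient) when $a\in\A\setminus\Ao$, because $ax$ is no longer available as an element of $\Ao$ to write $\pi_\omega(a)\lambda_\omega(x)=\lambda_\omega(ax)$ directly.
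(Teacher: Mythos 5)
The paper states this theorem without proof (it is quoted from \cite{Trap1}), and your argument is essentially the GNS-type construction given in that reference: quotient $\Ao$ by the kernel of $\omega(y^*x)$, use (L.2)--(L.3) to define $\pi_\omega(a)$ weakly via Riesz on the quotient for general $a\in\A$, verify the $\dagger$- and partial-multiplication axioms, and pass to the closure. The proof is correct; the only cosmetic slips are that well-definedness in $x$ uses (L.3) applied to $a^*y$ rather than $ay$, and the intertwining of the unitary with $\pi_\omega(a)$ for $a\in\A\setminus\Ao$ deserves the one-line weak-density argument you only hint at.
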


\subsection{Normed quasi *-algebras}

\begin{definition}
A  quasi *-algebra $(\A,\Ao)$ is called a {\it normed quasi *-algebra} if a norm
$\|\cdot\|$ is defined on $\A$ with the properties
\begin{itemize}
	\item[(i)]$\|a^*\|=\|a\|, \quad \forall a \in \A$;
	\item[(ii)] $\Ao$ is dense in $\A$;
	\item[(iii)]for every $x \in \Ao$, the map $R_x: a \in \A \to ax \in \A$ is continuous in
	$\A$.
\end{itemize}
\end{definition}
The continuity of the involution implies that
	\begin{itemize}
		\item[(iii')]for every $x \in \Ao$, the map $L_x: a \in \A \to xa \in \A$ is continuous in
		$\A$.
	\end{itemize}

\begin{definition}
	If $(\A,\| \cdot \|) $ is a Banach space, we say that $(\A,\Ao)$ is a {\it Banach quasi *-algebra}.
\end{definition}
The norm topology of $\A$ will be denoted by $\tau_n$. 
\smallskip

An important class of Banach quasi *-algebras is given by \textit{Hilbert quasi *-algebras}. 

\begin{definition}\label{Hilb_qalg}
	Let $\Ao$ be a *-algebra which is also a pre-Hilbert space with respect to the inner product $\ip{\cdot}{\cdot}$ such that:
	\begin{enumerate}
		\item the map $y\mapsto xy$ is continuous with respect to the norm defined by the inner product;
		\item $\ip{xy}{z}=\ip{y}{x^*z}$ for all $x,y,z\in\Ao$;
		\item $\ip{x}{y}=\ip{y^*}{x^*}$ for all $x,y\in\Ao$;
		\item $\Ao^2$ is total in $\Ao$.
	\end{enumerate}
Such a *-algebra $\Ao$ is said to be a \textit{Hilbert algebra}. If $\H$ denotes the Hilbert space completion of $\Ao$ with respect to the inner product $\ip{\cdot}{\cdot}$, then $(\H,\Ao)$ is called {\it Hilbert quasi *-algebra}.
\end{definition}

\section{Analogy between *-semisimplicity and full representability}\label{Sec2}
Representable functionals constitute a valid tool for investigating structural properties of Banach quasi *-algebras. Indeed, the notions of *-semisimplicity and full representability are strongly related to these functionals. They turn out to be the same notion when dealing with quasi *-algebras that verify the condition $(P)$. For further reading, see \cite{AT, Ant1,Bag5,Frag2,Trap1}.

If $\omega \in \rep$, then we can associate with it the sesquilinear form $\varphi_{\omega}$ defined on $\Ao\times\Ao$ as
\begin{equation} \label{sesqu_ass} \vp_\omega(x,y):= \omega(y^*x), \quad x,y \in \Ao.\end{equation}

If $(\A, \Ao)$ is a normed quasi *-algebra, we denote by $\crep$ the subset of $\rep$ consisting of continuous functionals.

Let $\omega\in\mathcal{R}_c(\A,\Ao)$. Then $\omega$ is continuous on $\A$, but $\varphi_{\omega}$ is not necessarily continuous on $\Ao\times\Ao$. $\varphi_{\omega}$ is said to be \textit{closable} if for every sequence of elements $\{x_n\}$ in $\Ao$ such that
\begin{equation}\label{clos}x_n\to 0\quad\text{and}\quad\varphi(x_n-x_m,x_n-x_m)\to0\quad\text{as}\quad n\to\infty\end{equation}
then $\varphi(x_n,x_n)\to0$ as $n\to\infty$. 

By the condition \eqref{clos}, the closure of $\varphi_{\omega}$, denoted by $\overline{\varphi}_{\omega}$, is a well-defined sesquilinear form on $\mathcal{D}(\overline{\varphi}_{\omega})\times\mathcal{D}(\overline{\varphi}_{\omega})$ as
$$\overline{\varphi}_{\omega}(a,a):=\lim_{n\to\infty}\varphi_{\omega}(x_n,x_n),$$
where $\mathcal{D}(\overline{\varphi}_{\omega})$ is the following dense domain
\begin{multline}\notag \D(\overline{\varphi}_{\omega})=\{a\in\A:\exists\{x_n\}\subset\Ao\;\text{s.t.}\;x_n\to a\;\text{and}\;\\
\varphi_{\omega}(x_n-x_m,x_n-x_m)\to0\}.
\end{multline}

For a locally convex quasi *-algebra $(\A,\Ao)$, $\overline{\varphi}_{\omega}$ always exists, \cite{Frag2}. Nevertheless, it is unclear whether $\mathcal{D}(\overline{\varphi}_{\omega})$ is the whole space $\A$. We show in Proposition \ref{prop1} below that $\mathcal{D}(\overline{\varphi}_{\omega})$ is $\A$ in the case of a Banach quasi *-algebra.

\begin{proposition}\cite{AT}\label{prop1}
	Let $(\A, \Ao)$ be a Banach quasi *-algebra with unit $\id$,  $\omega \in \crep$ and $\vp_{\omega}$ the associated sesquilinear form on $\Ao \times \Ao$ defined as in \eqref{sesqu_ass}. Then $\D(\overline{\vp}_\omega)=\A$;  hence $\overline{\vp}_\omega$ is everywhere defined and bounded.
\end{proposition}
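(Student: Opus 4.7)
The plan is to upgrade the pointwise bound in (L.3) to a \emph{uniform} bound $\gamma_a \leq K\|a\|$, which will immediately yield boundedness of $\vp_\omega$ on $\Ao\times\Ao$ and hence an everywhere-defined bounded extension that must coincide with $\overline{\vp}_\omega$.

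First, I would use (L.3) to realize the functionals $x\mapsto\omega(a^*x)$ through the GNS data of Theorem \ref{thm_repr}. Write $\lambda_\omega:\Ao\to\Hil_\omega$ for the canonical map, so $\|\lambda_\omega(x)\|^2=\omega(x^*x)$. For each $a\in\A$, define $F_a(\lambda_\omega(x)):=\omega(a^*x)$; this is well-defined on $\lambda_\omega(\Ao)$ by (L.3), is linear, and satisfies $|F_a(\lambda_\omega(x))|\leq \gamma_a\|\lambda_\omega(x)\|$. Extending by continuity and using Riesz, I obtain a unique vector $\eta_a\in\Hil_\omega$ with
\[
\omega(a^*x)=\ip{\lambda_\omega(x)}{\eta_a},\quad \forall x\in\Ao,\qquad \|\eta_a\|=\gamma_a.
\]
A direct check using the defining relation shows that the map $T:a\mapsto\eta_a$ from $\A$ to $\Hil_\omega$ is linear.

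The core step is to show $T$ is bounded, for which I would invoke the closed graph theorem. Suppose $a_n\to a$ in $\A$ and $\eta_{a_n}\to\eta$ in $\Hil_\omega$. For any fixed $x\in\Ao$, the continuity of the involution and of right multiplication $R_x$ on $\A$, combined with the continuity of $\omega$, gives
\[
|\omega(a_n^*x)-\omega(a^*x)|\leq C\,\|R_x\|_{\mathrm{op}}\,\|a_n-a\|\to 0,
\]
while on the other side $\ip{\lambda_\omega(x)}{\eta_{a_n}}\to \ip{\lambda_\omega(x)}{\eta}$. Hence $\ip{\lambda_\omega(x)}{\eta_a-\eta}=0$ for every $x\in\Ao$, and density of $\lambda_\omega(\Ao)$ in $\Hil_\omega$ forces $\eta=\eta_a$. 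Thus $T$ has closed graph, so there exists $K>0$ with $\gamma_a=\|\eta_a\|\leq K\|a\|$ for all $a\in\A$.

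Plugging this back in, for $x,y\in\Ao$ I get
\[
|\vp_\omega(x,y)|=|\omega(y^*x)|\leq \gamma_y\,\omega(x^*x)^{1/2}\leq K\|y\|\,\gamma_x\leq K^2\|y\|\|x\|,
\]
where the middle inequality uses (L.3) with $a=x\in\Ao$ to bound $\omega(x^*x)^{1/2}\leq\gamma_x$. So $\vp_\omega$ is bounded on $\Ao\times\Ao$ in the norm of $\A$. By density of $\Ao$ in $\A$ it extends uniquely to a bounded sesquilinear form on all of $\A\times\A$. Finally, for any $a\in\A$, a sequence $\{x_n\}\subset\Ao$ with $x_n\to a$ is Cauchy in $\A$ and satisfies $\vp_\omega(x_n-x_m,x_n-x_m)\leq K^2\|x_n-x_m\|^2\to 0$, so $a\in\D(\overline{\vp}_\omega)$ and the extension coincides with $\overline{\vp}_\omega$. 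The main obstacle is the uniform control $\gamma_a\leq K\|a\|$: (L.3) alone gives the constants pointwise, and one must combine the GNS construction with continuity of $\omega$ on $\A$ (plus the closed graph theorem) to promote this to linear growth in $\|a\|$.
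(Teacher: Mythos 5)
Your argument is correct. Note that the survey states Proposition \ref{prop1} without proof, deferring to \cite{AT}; your route --- realizing $x\mapsto\omega(a^*x)$ as $\ip{\lambda_\omega(x)}{\eta_a}$ via the GNS space and the Riesz lemma, upgrading the pointwise constants of (L.3) to a uniform bound $\|\eta_a\|\le K\|a\|$ by the closed graph theorem (this is exactly where the continuity of $\omega$ together with the axioms $\|a^*\|=\|a\|$ and boundedness of $R_x$ enter), and then concluding $|\vp_\omega(x,y)|\le K^2\|x\|\,\|y\|$ so that every norm-Cauchy sequence in $\Ao$ is $\vp_\omega$-Cauchy --- is the natural argument and matches how the survey later uses the result (it deduces $|\omega(\xi^*x)|\le c_\xi\|x\|$ from the boundedness of $\overline{\vp}_\omega$ in Section 4). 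One cosmetic slip: Riesz gives $\|\eta_a\|\le\gamma_a$ rather than equality, since $\gamma_a$ in (L.3) is merely \emph{some} admissible constant; this is harmless because $\|\eta_a\|$ is itself an admissible choice of $\gamma_a$, which is what your final chain of inequalities (including the step $\omega(x^*x)^{1/2}\le\|\eta_x\|\le K\|x\|$) actually requires.
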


Set now
$$\Ao^+:=\left\{\sum_{k=1}^n x_k^* x_k, \, x_k \in \Ao,\, n \in {\mb N}\right\}.$$
Then $\Ao^+$ is a wedge in $\Ao$ and we call the elements of $\Ao^+$ \emph{positive elements of} $\Ao$.
As in \cite{Frag2}, we call {positive elements of} $\A$ the members of $\overline{\Ao^+}^{\tau_n}$. We set $\A^+:=\overline{\Ao^+}^{\tau_n}$.

\begin{definition} A linear functional on $\A$ is \textit{positive} if $\omega(a)\geq0$ for every $a\in\A^+$. A family of positive linear functionals $\mc F$ on
	$(\A[\tau_n], \Ao)$ is called {\it sufficient} if for every $a \in
	\A^+$, $a \neq 0$, there exists $\omega \in {\mc F}$ such that $\omega
	(a)>0$.
\end{definition}

\begin{definition}\label{fully_rep} A normed quasi $^{\ast}$-algebra
	$(\A[\tau_n],\Ao)$ is called {\it fully representable} if ${\mc
		R}_c(\A,\Ao)$ is sufficient and $\D(\overline{\varphi}_{\omega})=\A$ for every $\omega$ in $\mathcal{R}_c(\A,\Ao)$.
\end{definition}

We denote by $\mathcal{Q}_{\Ao}(\A)$ the family of all sesquilinear forms $\Omega:\A\times\A\to\mathbb{C}$ such that
\begin{itemize}
	\item[(i)] $\Omega(a,a)\geq0$ for every $a\in\A$;
	\item[(ii)] $\Omega(ax,y)=\Omega(x,a^*y)$ for every $a\in\A$, $x,y\in\Ao$;
\end{itemize}
We denote by $\SSA$ the subset of $\mathcal{Q}_{\Ao}(\A)$ consisting of all continuous sesquilinear forms having the property that also
\begin{itemize}
	\item[(iii)]$ |\Omega(a,b)|\leq \|a\|\|b\|$, for all $a, b\in \A.$
\end{itemize}

\begin{definition}\label{def1}
	A normed quasi *-algebra $(\mathfrak{A}[\tau_n],\Ao)$ is called {\bf *-semi\-simple} if, for every $0\neq a\in\A$, there exists $\Omega\in\mathcal{S}_{\Ao}(\A)$ such that $\Omega(a,a)>0$.
\end{definition}

Proposition \ref{prop1} is useful to show the following result, making clear what the link is between *-semisimplicity and full representability. We need first to introduce the following condition of positivity $(P)$
\begin{equation}\notag\label{P} a\in\A\;\text{and}\;\omega(x^*ax)\geq0\;\;\forall\omega\in\mathcal{R}_c(\A,\Ao)\;\text{and}\;x\in\Ao\;\;\Rightarrow\;\;a\in\A^+.
\end{equation}

\begin{theorem}\cite{AT}\label{thm_fullrep_semis} Let $(\A, \Ao)$ be a Banach quasi *-algebra with unit $\id$. The following statements are equivalent. 
	\begin{itemize}
		\item[(i)]$\crep$ is sufficient.
		\item[(ii)]$(\A,\Ao)$ is fully representable.
	\end{itemize}
	If the condition $(P)$ holds, (i) and  (ii) are equivalent to the following
	\begin{itemize}
		\item[(iii)]$(\A,\Ao)$ is *-semisimple.
	\end{itemize}
\end{theorem}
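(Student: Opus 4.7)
The plan is to treat (i) $\Leftrightarrow$ (ii) first (it needs no extra hypothesis) and then to obtain the equivalence (i) $\Leftrightarrow$ (iii) under (P) via a back-and-forth between continuous representable functionals and sesquilinear forms in $\SSA$. The equivalence of (i) and (ii) is immediate from Definition \ref{fully_rep}, since the condition $\D(\overline{\varphi}_\omega) = \A$ is automatic in the Banach setting by Proposition \ref{prop1}. To set up the correspondence for the rest, Proposition \ref{prop1} also places $\overline{\varphi}_\omega$ (after scaling by $\|\omega\|$) inside $\SSA$, while the invariance of $\Omega \in \SSA$ together with $\id \in \Ao$ shows that $\omega_\Omega(a) := \Omega(a, \id)$ is a continuous representable functional with $\overline{\varphi}_{\omega_\Omega} = \Omega$; moreover, a short computation gives $\omega_x(\cdot) := \omega(x^* \cdot x) \in \crep$ for each $\omega \in \crep$ and $x \in \Ao$.

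For (iii) $\Rightarrow$ (i) under (P), I would argue by contradiction. Suppose $0 \neq a \in \A^+$ satisfies $\omega(a) = 0$ for every $\omega \in \crep$; applied to $\omega_x$ this gives $\omega(x^* a x) = 0$ identically. Using *-semisimplicity, choose $\Omega \in \SSA$ with $\Omega(a, a) > 0$ and set $\omega := \omega_\Omega$; invariance turns $0 = \Omega(x^* a x, \id)$ into $\Omega(ax, x) = 0$ for every $x \in \Ao$. The map $(u, v) \mapsto \Omega(au, v)$ is sesquilinear on $\Ao \times \Ao$ and vanishes on the diagonal, so complex polarization forces it to vanish identically. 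Since $a = a^* \in \A^+$, invariance rewrites this as $\Omega(u, av) = 0$, and density of $\Ao$ in $\A$ together with boundedness of $\Omega$ extend the vanishing to $\Omega(b, av) = 0$ for all $b \in \A$ and $v \in \Ao$. Specialising to $b = a$, $v = \id$ yields $\Omega(a, a) = 0$, contradicting the choice of $\Omega$.

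For (i) $\Rightarrow$ (iii), again by contradiction: take $0 \neq a$ with $\Omega(a, a) = 0$ for every $\Omega \in \SSA$. Then $\overline{\varphi}_\omega(a, a) = 0$ for all $\omega \in \crep$, and Cauchy-Schwarz on the bounded positive form $\overline{\varphi}_\omega$ yields $\overline{\varphi}_\omega(a, b) = 0$ for every $b \in \A$; in particular $\omega(a) = 0$. The delicate step is to upgrade this to $\omega(x^* a x) = 0$ for every $x \in \Ao$: I would pass to the GNS data, noting that $\overline{\varphi}_\omega(a, a) = 0$ is equivalent to $\lambda_\omega(a) = 0$ in $\H_\omega$ and that the identity $\lambda_\omega(ax) = \pi_\omega(a) \lambda_\omega(x)$ forces $\overline{\varphi}_\omega(ax, ax) = 0$, whence a further application of Cauchy-Schwarz gives $\omega(x^* a x) = 0$. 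Condition (P) applied to $\pm a$ then forces $a, -a \in \A^+$, so in particular $0 \neq a \in \A^+$; sufficiency of $\crep$ supplies some $\omega$ with $\omega(a) > 0$, contradicting $\omega(a) = 0$. The principal obstacle is precisely this promotion from $\overline{\varphi}_\omega(a, a) = 0$ to $\omega(x^* a x) = 0$, which cannot be derived purely from the axioms (L.1)-(L.3) and genuinely uses the Hilbert-space realization of $\omega$ provided by the GNS construction.
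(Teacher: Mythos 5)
Your overall architecture is sound and consistent with the argument in \cite{AT} that the survey cites (the survey itself does not reproduce the proof): the equivalence (i) $\Leftrightarrow$ (ii) is indeed immediate from Proposition \ref{prop1}, your proof of (iii) $\Rightarrow$ (i) correctly makes no use of condition $(P)$ (matching the remark following the theorem), and the dictionary $\omega \mapsto \overline{\varphi}_\omega$, $\Omega \mapsto \Omega(\cdot,\id)$, $\omega \mapsto \omega_x$ is exactly the right machinery. The polarization argument in (iii) $\Rightarrow$ (i) is correct as written.

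There is, however, a genuine gap at the step you yourself flag as delicate in (i) $\Rightarrow$ (iii). You assert that $\lambda_\omega(a)=0$ together with the identity $\lambda_\omega(ax)=\pi_\omega(a)\lambda_\omega(x)$ forces $\overline{\varphi}_\omega(ax,ax)=0$. It does not: $\lambda_\omega(a)=0$ only says that $\pi_\omega(a)$ annihilates the single vector $\xi_\omega=\lambda_\omega(\id)$, and an operator may kill the cyclic vector without killing $\lambda_\omega(x)=\pi_\omega(x)\xi_\omega$; equivalently, the kernel $\{b:\lambda_\omega(b)=0\}$ is stable under \emph{left} multiplication by $\Ao$ but not under right multiplication (already for a vector state on $M_2(\mathbb{C})$ this fails). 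The repair uses the tool you introduced in your setup but then abandoned: since $\omega_x:=\omega(x^*\cdot\, x)\in\crep$, Proposition \ref{prop1} makes $\overline{\varphi}_{\omega_x}$ everywhere defined and bounded, a suitable normalization of it belongs to $\SSA$, and a direct computation gives $\overline{\varphi}_{\omega_x}(a,a)=\overline{\varphi}_\omega(ax,ax)$. Applying the hypothesis ``$\Omega(a,a)=0$ for every $\Omega\in\SSA$'' to this \emph{translated} form --- rather than to the GNS realization of the single functional $\omega$ --- yields $\overline{\varphi}_\omega(ax,ax)=0$, after which Cauchy--Schwarz gives $\omega(x^*ax)=\overline{\varphi}_\omega(ax,x)=0$ as you intended. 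The remainder of your argument (applying $(P)$ to $\pm a$ and then invoking sufficiency of $\crep$) is correct.
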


The condition $(P)$ is not needed to show (iii) $\Rightarrow$ (ii) of Theorem \ref{thm_fullrep_semis}. 

Theorem \ref{thm_fullrep_semis} shows the deep connection between full representability and *-semisimplicity for a Banach quasi *-algebra. Under the condition of positivity $(P)$, the families of sesquilinear forms involved in the definitions of full representability and *-semisimplicity can be identified. 
\smallskip

For a Hilbert quasi *-algebra $(\H,\Ao)$, representable and continuous functionals are in 1-1 correspondence with a certain family of elements in $\H$. 

\begin{definition}\label{Hilb_wp_bou}
	Let $(\H,\Ao)$ be a Hilbert quasi *-algebra. An element $\xi\in\H$ is called
	\begin{itemize}
		\item[(i)] {\bf weakly positive} if the operator $L_{\xi}:\Ao\to\H$ defined as $L_{\xi}(x)=\xi x$ is positive.
		\item[(ii)] {\bf bounded} if the operator $L_{\xi}:\Ao\to\H$ is bounded.
	\end{itemize}
	The set of all weakly positive (resp. bounded) elements will be denoted as $\H^+_w$ (resp. $\H_{\bou}$), see \cite{AT,ct1}.
\end{definition}

\begin{theorem}\cite{AT}\label{Hrepc}
	Let $(\H,\Ao)$ be a Hilbert quasi *-algebra. Then $\omega\in\mathcal{R}_c(\H,\Ao)$ if, and only if, there exists a unique weakly positive bounded element $\eta\in\H$ such that
	$$\omega(\xi)=\ip{\xi}{\eta},\quad\forall\xi\in\H.$$
\end{theorem}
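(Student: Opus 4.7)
The plan is to prove the two implications separately; in both directions, the uniqueness of $\eta$ comes from the Riesz representation theorem, so the real content lies in matching the three axioms (L.1)--(L.3) with the conditions of weak positivity and boundedness on $\eta$.

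For the $(\Rightarrow)$ direction, given $\omega\in\mathcal{R}_c(\H,\Ao)$, I would first apply the Riesz representation theorem to the continuous linear functional $\omega$ on the Hilbert space $\H$, obtaining the unique $\eta\in\H$ with $\omega(\xi)=\ip{\xi}{\eta}$. Weak positivity is immediate from (L.1): using the Hilbert algebra identities of Definition \ref{Hilb_qalg} extended by continuity to allow one argument in $\H$, one computes $\ip{\eta y}{y}=\ip{\eta}{yy^*}=\overline{\omega(yy^*)}$, which is $\omega(yy^*)\ge 0$ by (L.1) applied to $y^*$, so $L_\eta$ is positive on $\Ao$. The boundedness of $L_\eta$ is the non-routine step: I would invoke Proposition \ref{prop1}, which says the sesquilinear form $\varphi_\omega(x,y)=\omega(y^*x)$ extends to a bounded sesquilinear form on $\H\times\H$. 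Rewriting $\varphi_\omega(x,y)=\ip{x}{y\eta}$ via the same Hilbert algebra identities, the bound $|\ip{x}{y\eta}|\le C\|x\|\|y\|$ yields $\|y\eta\|\le C\|y\|$, so right multiplication by $\eta$ is bounded. To transfer this to left multiplication, I would deduce $\eta=\eta^*$ from (L.2) applied with $x=y=\id$ and $a=\eta$: this yields $\ip{\eta^*}{\eta}=\|\eta\|^2$, and the equality case of Cauchy--Schwarz forces $\eta^*$ to be a scalar multiple of $\eta$, with the scalar pinned down to $1$. Since $\eta y=(y^*\eta)^*$ when $\eta=\eta^*$, boundedness of right multiplication then transfers to boundedness of $L_\eta$.

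For the $(\Leftarrow)$ direction, given $\eta$ weakly positive and bounded, define $\omega(\xi):=\ip{\xi}{\eta}$, which is continuous by Cauchy--Schwarz. The operator $L_\eta$ extends by density to a bounded positive self-adjoint operator on $\H$, and self-adjointness translates back to $\eta=\eta^*$ using the Hilbert algebra identities together with the totality of $\Ao^2$ in $\Ao$ (axiom (4) of Definition \ref{Hilb_qalg}). Axiom (L.1) then reads $\omega(x^*x)=\ip{L_\eta x^*}{x^*}\ge 0$; axiom (L.2) reduces to the identity $\omega(\xi^*)=\overline{\omega(\xi)}$, which follows from $\eta=\eta^*$ and axiom (3); and (L.3) follows from Cauchy--Schwarz applied to the factorization $\omega(a^*x)=\ip{L_\eta^{1/2}a^*}{L_\eta^{1/2}x^*}$ together with $\omega(x^*x)=\|L_\eta^{1/2}x^*\|^2$, yielding $\gamma_a:=\|L_\eta^{1/2}a^*\|$, which is finite by boundedness of $L_\eta^{1/2}$.

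The main obstacle is the boundedness of $L_\eta$ in the $(\Rightarrow)$ direction: this is not visible from axioms (L.1)--(L.3) alone and requires Proposition \ref{prop1} as an external input, which converts continuity of $\omega$ into boundedness of the sesquilinear form $\varphi_\omega$ on the whole space. A secondary technical nuisance is the systematic use of the Hilbert algebra identities extended by continuity to arguments in $\H$, which recurs throughout but is routine once set up carefully.
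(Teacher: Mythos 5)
Your proof is correct, and in the forward direction it takes a route that is genuinely different from the one the paper itself sketches (in the discussion following Theorem \ref{sesq_tens_prod}) for this statement. Both arguments share the same skeleton: Proposition \ref{prop1} is the essential external input, the representing vector $\eta$ is the Riesz vector (the paper writes it as $S_\omega\id$, where $S_\omega$ is the bounded operator representing $\overline{\vp}_\omega$ --- the same vector), and weak positivity is the same one-line computation $\ip{\eta y}{y}=\ip{\eta}{yy^*}=\omega(yy^*)\ge 0$. The divergence is in proving that $\eta$ is a bounded element: the paper shows that the weak products $\xi\wmult\eta$ and $\eta\wmult\xi$ are well defined for every $\xi\in\H$ and then invokes the characterization of bounded elements as those with $R_w(\eta)=L_w(\eta)=\H$ (\cite[Proposition 4.10]{AT}), a nontrivial imported result, whereas you extract the explicit bound $\|y\eta\|\le C\|y\|$ directly from the boundedness of $\overline{\vp}_\omega$ and transfer it from right to left multiplication via $\eta=\eta^*$ and the isometry of the involution. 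Your route is more elementary and self-contained, at the cost of the extra step $\eta=\eta^*$; your equality-case-of-Cauchy--Schwarz argument for that step works, though it is quicker to note that (L.2) with $x=y=\id$ gives $\ip{a^*}{\eta}=\overline{\ip{a}{\eta}}=\ip{a^*}{\eta^*}$ for all $a\in\H$, whence $\eta=\eta^*$ since $a^*$ ranges over all of $\H$. Your converse direction, which the survey does not reproduce at all, is also sound; factoring $\omega(a^*x)=\ip{\overline{L_\eta}^{1/2}a^*}{\overline{L_\eta}^{1/2}x^*}$ is exactly the right way to obtain (L.3). Two small caveats: the theorem as printed omits the hypothesis that $(\H,\Ao)$ is unital, but both your argument and the paper's (via Proposition \ref{prop1} and evaluation at $\id$) require it, so you are no worse off than the source; and Definition \ref{Hilb_wp_bou} phrases positivity and boundedness in terms of $L_\eta$ while the paper's own computation uses $R_\eta$ --- your proof establishes both, so nothing is lost.
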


\subsection{Case of $L^p(I,d\lambda)$ for $p\geq1$} Consider $\Ao$ to be $L^{\infty}(I,d\lambda)$, where $I$ is a compact interval of the real line and $\lambda$ is the Lebesgue measure. Let $\tau_n$ be the topology generated by the $p$-norm $\|\cdot\|_p$
$$\|f\|_p:=\left(\int_I|f|^pd\lambda\right)^{\frac1p},\quad\forall f\in L^{\infty}(I,d\lambda),$$
for $p\geq1$. Then, the completion of $L^{\infty}(I,d\lambda)$ with respect to the $\|\cdot\|_p$-norm is given by $L^p(I,d\lambda)$.

We conclude that, for $p\geq 1$, $(L^p(I,d\lambda),L^{\infty}(I,d\lambda))$ is a Banach quasi *-algebra. The same conclusion holds if we consider the *-algebra of continuous functions over $I=[0,1]$, denoted by $\mathcal{C}(I)$.

For $p\geq 2$, $(L^p(I,d\lambda),L^{\infty}(I,d\lambda))$ and $(L^p(I,d\lambda),\mathcal{C}(I))$ are fully representable and *-semisimple Banach quasi *-algebras. 

For $1\leq p<2$, we have $\mathcal{R}_c(L^p(I,d\lambda),\Ao)=\{0\}$ for both $\Ao=L^{\infty}(I,d\lambda)$ or $\Ao=\mathcal{C}(I)$. 

Note that the Banach quasi *-algebras $(L^p(I,d\lambda),L^{\infty}(I,d\lambda))$ and $(L^p(I,d\lambda),\mathcal{C}(I))$ verify the condition $(P)$ for all $p\geq1$.

The absence of representable and continuous functionals that we observe passing the threshold $p=2$ and the lack in the literature of an example of representable functional that is not continuous led us to pose the following question
\smallskip

\textbf{Question}: Is every representable functional on a Banach quasi *-algebra continuous?
\smallskip

The answer is positive in the case of the Hilbert quasi *-algebras $(L^2(I,d\lambda),L^{\infty}(I,d\lambda))$ and $(L^2(I,d\lambda),\mathcal{C}(I))$, see \cite{AT}.

\begin{theorem}\cite{AT}
	Let $\omega$ be a representable functional on $(L^2(I,d\lambda), \allowbreak L^{\infty}(I,d\lambda))$. Then there exists a {unique} bounded finitely additive measure $\nu$ on $I$ which vanish on subsets of $I$ of zero $\lambda-$measure and a unique bounded linear operator $S: L^2(I, d\lambda)\to L^2(I, d\nu)$ such that
	\begin{equation}\notag \omega(f) = \int_I (Sf)d\nu, \quad \forall f \in L^2(I,d\lambda).\end{equation}
	The operator $S$ satisfies the following conditions:
	\begin{align}
	&S(f\phi)=(Sf)\phi=\phi(Sf)\quad \forall f\in L^2(I,d\lambda),\phi\in L^{\infty}(I,d\lambda); \\
	&S\phi=\phi, \quad \forall \phi \in L^{\infty}(I,d\lambda). 
	\end{align}
	Thus, every representable functional $\omega$ on $\left(L^2(I,d\lambda),L^{\infty}(I,d\lambda)\right)$ is continuous.
\end{theorem}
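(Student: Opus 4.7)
The plan is to construct $\nu$ and $S$ out of the GNS machinery, verify their algebraic properties, and then establish boundedness of $S$ (the main technical step), from which continuity of $\omega$ is immediate.

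I would first apply Theorem \ref{thm_repr} to obtain a closed cyclic *-representation $\pi_\omega$ on a Hilbert space $\mathcal{H}_\omega$ with cyclic vector $\xi_\omega$ satisfying $\omega(a)=\ip{\pi_\omega(a)\xi_\omega}{\xi_\omega}$ for every $a\in L^2(I,d\lambda)$. Since $L^\infty(I,d\lambda)$ is a commutative C*-algebra in sup-norm, $\pi_\omega$ restricts there to a bounded *-representation, and $\omega|_{L^\infty}$ is a positive linear functional, hence sup-norm continuous. By the standard duality identifying bounded functionals on $L^\infty(I,d\lambda)$ with bounded finitely additive measures on $I$, there exists a unique bounded positive finitely additive measure $\nu$ with $\omega(\phi)=\int_I \phi\,d\nu$ for all $\phi\in L^\infty$; since $\chi_E$ is the zero element of $L^\infty$ whenever $\lambda(E)=0$, the measure $\nu$ vanishes on $\lambda$-null sets.

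Next, for $f\in L^2(I,d\lambda)$ I would consider the linear functional $T_f(\phi):=\omega(f\phi)$ on $L^\infty$. Applying (L.3) with $a=f^*$ yields $|T_f(\phi)|\leq\gamma_{f^*}\omega(\phi^*\phi)^{1/2}=\gamma_{f^*}\|\phi\|_{L^2(\nu)}$, so $T_f$ descends to and is continuous on the quotient in $L^2(\nu)$-norm; density of $L^\infty$ in $L^2(I,d\nu)$ permits extension, and Riesz representation in this Hilbert space produces a unique $Sf\in L^2(I,d\nu)$ with $T_f(\phi)=\int_I \phi(Sf)\,d\nu$. Setting $\phi=\id$ gives the integral formula $\omega(f)=\int_I Sf\,d\nu$. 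Linearity of $S$ is inherited from $T_f$. To verify the algebraic identities, I test against $\psi\in L^\infty$: for $\phi\in L^\infty$, $\int_I \psi(S\phi)\,d\nu=\omega(\phi\psi)=\int_I \phi\psi\,d\nu$ forces $S\phi=\phi$, while for $f\in L^2(I,d\lambda)$ and $\phi\in L^\infty$, the associativity $(f\phi)\psi=f(\phi\psi)$ gives $\int_I \psi S(f\phi)\,d\nu=\omega(f(\phi\psi))=\int_I (\phi\psi)(Sf)\,d\nu=\int_I \psi\,\phi(Sf)\,d\nu$, forcing $S(f\phi)=(Sf)\phi$.

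The crux is establishing boundedness of $S:L^2(I,d\lambda)\to L^2(I,d\nu)$. The natural attempt is a closed graph argument: the isometric identification $\mathcal{H}_\omega\cong L^2(I,d\nu)$ induced by $\phi\mapsto \pi_\omega(\phi)\xi_\omega$ (via $\|\pi_\omega(\phi)\xi_\omega\|^2=\omega(\phi^*\phi)=\|\phi\|_{L^2(\nu)}^2$) identifies $Sf$ with $\pi_\omega(f)\xi_\omega$, and closedness of $\pi_\omega$ together with the quasi *-algebra structure ought to transfer to closedness of $S$. The real obstruction I expect is ruling out a purely finitely additive component of $\nu$: boundedness of $S$ amounts to an inequality $\nu(E)\leq C\lambda(E)$ for every measurable $E$, which must be extracted from the global validity of condition (L.3) on all of $L^2(I,d\lambda)$, not merely on $L^\infty$. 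Once boundedness of $S$ is in hand, continuity of $\omega$ follows at once from $|\omega(f)|=|\int_I Sf\,d\nu|\leq\nu(I)^{1/2}\|S\|\,\|f\|_{L^2(\lambda)}$, and uniqueness of $\nu$ and $S$ is automatic from the respective applications of the Riesz representation theorem.
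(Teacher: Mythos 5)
Your construction of $\nu$ (via positivity of the restriction of $\omega$ to $L^{\infty}(I,d\lambda)$ and the duality between $L^{\infty}(I,d\lambda)^{*}$ and bounded finitely additive measures vanishing on $\lambda$-null sets) and of $S$ (via condition (L.3) and Riesz representation in $L^{2}(I,d\nu)$), together with the verification of $S\phi=\phi$ and $S(f\phi)=(Sf)\phi$ by testing against elements of $L^{\infty}$, is sound and follows essentially the route of \cite{AT}. But your argument stops exactly where the theorem's real content begins: the boundedness of $S$, which is what yields the final assertion that $\omega$ is continuous, is only announced as ``the crux'' and never established. The closed graph argument you sketch is circular: to show that $f_{n}\to f$ in $L^{2}(I,d\lambda)$ and $Sf_{n}\to g$ in $L^{2}(I,d\nu)$ force $g=Sf$, you would need $\omega(f_{n}\phi)\to\omega(f\phi)$, which is precisely the norm continuity of $\omega$ being proved; closedness of $\pi_{\omega}$ as a *-representation concerns the graph topology of its domain and gives no continuity with respect to the norm of $L^{2}(I,d\lambda)$. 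Likewise, the reduction of boundedness of $S$ to an estimate $\nu(E)\leq C\lambda(E)$ only goes one way (if $S$ is bounded then $\nu\leq\|S\|^{2}\lambda$, by testing on characteristic functions and using $S\chi_{E}=\chi_{E}$); the converse is not justified, and in any case neither inequality is proved.

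The missing ingredient --- and the one the survey explicitly names as a main tool alongside Riesz--Markov --- is the automatic continuity theory of intertwining operators in the sense of Sinclair \cite{sin}. The identity $S(f\phi)=(Sf)\phi$ that you have already derived exhibits $S$ as a module map from the Banach $L^{\infty}(I,d\lambda)$-module $L^{2}(I,d\lambda)$ to $L^{2}(I,d\nu)$, and it is the analysis of the separating space of such an intertwiner (a closed submodule whose behaviour under the multiplication operators can be controlled by the stability lemma) that forces $S$ to be bounded. Without importing that machinery, or an explicit substitute for it, the concluding claim that every representable functional on $\left(L^{2}(I,d\lambda),L^{\infty}(I,d\lambda)\right)$ is continuous remains unproved.
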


\begin{theorem}\cite{AT}
Let $\omega$ be a representable functional on $(L^2(I,d\lambda), \allowbreak\mathcal{C}(I))$. Then there exists a {unique} Borel measure $\mu$ on $I$ and a {unique} bounded linear operator $T: L^2(I, d\lambda)\to L^2(I, d\mu)$ such that
\begin{equation}\notag \omega(f) = \int_I (Tf)d\mu, \quad \forall f \in L^2(I,d\lambda).\end{equation}
	The operator $T$ satisfies the following conditions:
		\begin{align}
		&T(f\phi)=(Tf)\phi=\phi(Tf)\quad \forall f\in L^2(I,d\lambda),\phi\in \mathcal{C}(I); \\
		&T\phi=\phi, \quad \forall \phi \in \mathcal{C}(I). 
		\end{align}
		Thus, every representable functional $\omega$ on $\left(L^2(I,d\lambda),\mathcal{C}(I)\right)$ is continuous.
		Moreover, $\mu$ is absolutely continuous with respect to $\lambda$.
\end{theorem}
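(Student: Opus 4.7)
My plan follows the pattern of the $(L^2(\lambda),L^\infty(\lambda))$ case: extract $\mu$ from the restriction $\omega|_{\mathcal{C}(I)}$, use (L.3) to manufacture $T$ pointwise via a Riesz representation in $L^2(\mu)$, verify the algebraic properties, and finally promote $T$ to a bounded operator. First, condition (L.1) together with the fact that every nonnegative $g\in\mathcal{C}(I)$ is of the form $(\sqrt g)^*\sqrt g$ shows that $\omega|_{\mathcal{C}(I)}$ is positive on the commutative C*-algebra $\mathcal{C}(I)$, hence sup-norm continuous, and the Riesz--Markov--Kakutani theorem produces a unique finite regular Borel measure $\mu$ on $I$ with $\omega(\phi)=\int_I\phi\,d\mu$ for all $\phi\in\mathcal{C}(I)$. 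Next, for fixed $f\in L^2(I,d\lambda)$, applying (L.3) to $a=f^*$ gives $|\omega(f\phi)|\leq\gamma_{f^*}\omega(\phi^*\phi)^{1/2}=\gamma_{f^*}\|\phi\|_{L^2(\mu)}$ for all $\phi\in\mathcal{C}(I)$, so the linear functional $\phi\mapsto\omega(f\phi)$ on $(\mathcal{C}(I),\|\cdot\|_{L^2(\mu)})$ extends uniquely and boundedly to $L^2(\mu)$ (using density of $\mathcal{C}(I)$ in $L^2(\mu)$, which follows from regularity of $\mu$); Riesz representation in the Hilbert space $L^2(\mu)$ then yields a unique $Tf\in L^2(\mu)$ satisfying $\omega(f\phi)=\int_I\phi\,(Tf)\,d\mu$ for every $\phi\in\mathcal{C}(I)$.

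The algebraic properties of $T$ are essentially immediate from uniqueness of the Riesz representative. Linearity of $T$ follows. Setting $\phi=\id$ yields $\omega(f)=\int_I Tf\,d\mu$. Taking $f=\phi_0\in\mathcal{C}(I)$ and comparing $\int\phi\,T\phi_0\,d\mu=\omega(\phi_0\phi)=\int\phi_0\phi\,d\mu$ for all $\phi\in\mathcal{C}(I)$ forces $T\phi_0=\phi_0$ in $L^2(\mu)$. The module property $T(f\phi)=(Tf)\phi=\phi(Tf)$ comes from the associativity rule $(f\phi)\psi=f(\phi\psi)$ of the quasi *-algebra combined with the defining relations applied to $f\phi$ and to $f$, together with commutativity of pointwise multiplication in $L^2(\mu)$.

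The central difficulty is proving that $T\colon L^2(I,d\lambda)\to L^2(I,d\mu)$ is bounded, because everything else drops out once this is in hand: continuity of $\omega$ follows from $|\omega(f)|=|\int Tf\,d\mu|\leq\|T\|\sqrt{\mu(I)}\,\|f\|_{L^2(\lambda)}$, while the identity $T\phi=\phi$ combined with boundedness gives $\|\phi\|_{L^2(\mu)}\leq\|T\|\,\|\phi\|_{L^2(\lambda)}$ for $\phi\in\mathcal{C}(I)$, which by monotone approximation of Borel indicators extends to $\mu(A)\leq\|T\|^2\lambda(A)$ on all Borel sets, hence $\mu\ll\lambda$. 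The delicate point is securing the bound on $T$ itself: a naive closed-graph attempt stalls because $f_n\to f$ in $L^2(\lambda)$ and $Tf_n\to g$ in $L^2(\mu)$ forces one to compare $\omega(f_n\phi)$ with $\omega(f\phi)$, which is precisely the continuity one is trying to prove. I would break this circularity by invoking the GNS data supplied by Theorem \ref{thm_repr}: since $\mathcal{C}(I)$ is a commutative C*-algebra, the restriction $\pi_\omega|_{\mathcal{C}(I)}$ is a *-representation by bounded operators with $\|\pi_\omega(\phi)\|\leq\|\phi\|_\infty$, and the spectral theorem combined with cyclicity of $\xi_\omega$ yields a unitary $U\colon\mathcal{H}_\omega\to L^2(\mu)$ intertwining $\pi_\omega(\phi)$ with the multiplication operator $M_\phi$, under which $Tf$ corresponds to $U\pi_\omega(f)\xi_\omega$. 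Boundedness of $T$ then reduces to boundedness of $f\mapsto\pi_\omega(f)\xi_\omega$, which I expect to pin down via a uniform-boundedness argument on the family of linear functionals $\{f\mapsto\langle U\pi_\omega(f)\xi_\omega,\eta\rangle\}_{\eta\in\mathcal{H}_\omega,\,\|\eta\|\leq 1}$ on $L^2(\lambda)$, whose pointwise boundedness in $f$ is exactly the content of (L.3). Uniqueness of $\mu$ and $T$ is then immediate from Riesz--Markov and Riesz representation, respectively.
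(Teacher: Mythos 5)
Your construction of $\mu$ via Riesz--Markov, of $Tf$ via (L.3) and Riesz representation in $L^2(\mu)$, the identities $T\phi=\phi$ and $T(f\phi)=\phi\,(Tf)$, and the derivation of both the continuity of $\omega$ and the absolute continuity $\mu\ll\lambda$ \emph{from} the boundedness of $T$ all match the intended line of argument and are sound. The gap is exactly at the step you yourself flag as central: the uniform boundedness argument for $f\mapsto\pi_\omega(f)\xi_\omega$ does not work. To apply the uniform boundedness principle to the family $\{f\mapsto\ip{\pi_\omega(f)\xi_\omega}{\eta}\}_{\|\eta\|\leq1}$ you need each member to be a \emph{continuous} linear functional on $L^2(I,d\lambda)$; but (L.3) only gives control in the $\Ao$-variable, namely $|\omega(a^*x)|\leq\gamma_a\,\omega(x^*x)^{1/2}$ for $x\in\mathcal{C}(I)$ with $a$ fixed --- this is what makes the vector $\pi_\omega(f)\xi_\omega$ (equivalently $Tf$) well defined for each fixed $f$, and it is also all that your ``pointwise boundedness'' amounts to ($\sup_{\|\eta\|\leq1}|\ip{\pi_\omega(f)\xi_\omega}{\eta}|=\|\pi_\omega(f)\xi_\omega\|<\infty$, the trivially satisfied hypothesis). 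Nothing in (L.1)--(L.3) gives continuity of $f\mapsto\omega(x^*f)=\ip{\pi_\omega(f)\xi_\omega}{\lambda_\omega(x)}$ for fixed $x$; for $x=\id$ that continuity \emph{is} the continuity of $\omega$, which is the conclusion of the theorem. So the hypothesis of UBP you would need is equivalent to what you are trying to prove, and the argument is circular.

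The cited proof closes this gap by a different mechanism, which the survey points to explicitly: ``intertwining theory on Hilbert spaces'' in the sense of Sinclair \cite{sin}. You have already established the key structural fact, namely that $T\colon L^2(I,d\lambda)\to L^2(I,d\mu)$ satisfies $T(f\phi)=\phi\,(Tf)$ for all $\phi\in\mathcal{C}(I)$, i.e.\ $T$ intertwines the two $\mathcal{C}(I)$-module (multiplication) actions. Automatic continuity theory for such intertwining maps --- via the separating space $\mathfrak{S}(T)=\{g: \exists\, f_n\to0 \text{ in } L^2(\lambda),\ Tf_n\to g \text{ in } L^2(\mu)\}$, which is a closed $\mathcal{C}(I)$-submodule that the stability lemma forces to vanish --- yields that $T$ has closed graph with trivial separating space, hence is bounded, with no prior continuity assumption on $\omega$. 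Replacing your UBP step by this automatic continuity argument (or by directly quoting the relevant theorem from \cite{sin}) repairs the proof; everything before and after that step can stand as you wrote it.
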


The above theorems can be extended to the case of a commutative Hilbert quasi *-algebra, under certain hypotheses.

	\begin{theorem}\cite{AT}
		Let $(\mathcal{H},\Ao)$ be a commutative Hilbert quasi *-algebra with unit $\id$.
		Assume that $\Ao[\normo{\cdot}]$ is a Banach *-algebra and that there exists an element $x$ of $\Ao$ such that the spectrum $\sigma(\overline{R}_x)$ of the bounded operator $\overline{R}_x$ of right multiplication by $x$ consists only of its continuous part $\sigma_c(\overline{R}_x)$.
		If $\omega$ is representable on $(\A,\Ao)$, then $\omega$ is bounded.
	\end{theorem}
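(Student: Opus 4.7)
The plan is to reduce the problem to the $L^2(I,d\lambda)$ setting of the two preceding theorems, using the spectral theorem applied to $\overline{R}_x$. The hypothesised $x \in \Ao$ whose right-multiplication operator has purely continuous spectrum plays, in this abstract setting, the role of the coordinate function on $I=[0,1]$: the absence of point masses in the relevant spectral measure is what prevents representable functionals from exhibiting discontinuous behaviour.

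First, I would apply Theorem~\ref{thm_repr} to associate to $\omega$ a closed cyclic *-representation $(\pi_\omega, \Hil_\omega, \xi_\omega)$ with $\omega(a) = \ip{\pi_\omega(a)\xi_\omega}{\xi_\omega}$ for every $a\in\H$. Commutativity of $(\H, \Ao)$ together with the hypothesis that $\Ao[\normo{\cdot}]$ is a Banach *-algebra makes each $\overline{R}_y$, for $y \in \Ao$, a bounded normal operator on $\H$. I would then apply the spectral theorem to $\overline{R}_x$, producing a scalar spectral measure $\mu$ on $\sigma(\overline{R}_x)$ and a unitary identification of a cyclic subspace with $L^2(\sigma(\overline{R}_x),d\mu)$. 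The crucial point is that $\sigma(\overline{R}_x) = \sigma_c(\overline{R}_x)$ forces $\mu(\{\lambda\}) = 0$ for every $\lambda$, so $\mu$ is non-atomic, mirroring the Lebesgue measure on $I$. I would then adapt the proof of the theorem for $(L^2(I,d\lambda),\mathcal{C}(I))$ to construct a bounded linear operator $T : \H \to L^2(\sigma(\overline{R}_x),d\mu)$ satisfying $\omega(a) = \int (Ta)\,d\mu$, from which continuity of $\omega$ in the Hilbert norm would follow.

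The principal obstacle is that $x$ alone generates only the smallest von Neumann algebra containing $\overline{R}_x$, so the $L^2$-model a priori covers just one cyclic subspace rather than all of $\Ao$ or $\H$. Making the reduction global requires exploiting the commutativity of the ambient algebra, so that all $\overline{R}_y$ are simultaneously diagonalisable in a compatible measure-theoretic way, together with the density of $\Ao$ in $\H$ and condition (L.3), which transfers control of $\omega$ from $\Ao$ to the whole of $\H$. A secondary subtlety is that (L.3) applied with $a=\id$ only yields $\omega(x^*x)^{1/2}\le\gamma_{\id}$, a pointwise rather than norm-uniform bound; upgrading this to the Hilbert-norm estimate $|\omega(\xi)|\le C\|\xi\|_\H$ is precisely where the non-atomicity of $\mu$ is used in an essential way, as in the preceding $L^2$-theorems. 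Once continuity has been established, Theorem~\ref{Hrepc} directly delivers the weakly positive bounded element $\eta\in\H$ representing $\omega$.
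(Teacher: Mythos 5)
A preliminary remark: this survey does not actually contain a proof of the theorem in question. The statement is quoted from \cite{AT}, and the only indication of the argument given here is the sentence immediately following it, namely that ``the main tools to get these results are the Riesz--Markov Representation Theorem for continuous functions on a compact space and the intertwining theory on Hilbert spaces (see \cite{sin})''. So your proposal has to be measured against that indication and against the structure of the two concrete $L^2(I,d\lambda)$ theorems it generalizes.

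Measured that way, the proposal has a genuine gap: the two points you yourself label ``the principal obstacle'' and ``a secondary subtlety'' are precisely where all the mathematical content lies, and neither is resolved. First, the hypothesis $\sigma(\overline{R}_x)=\sigma_c(\overline{R}_x)$ enters your sketch only through ``the scalar spectral measure $\mu$ of $\overline{R}_x$ is non-atomic''. But non-atomicity of $\mu$ says nothing yet about $\omega$: a representable functional is merely linear and satisfies (L.1)--(L.3); it is not assumed measurable or continuous with respect to anything, so there is no a priori reason it should be computed by integration against any measure supported on $\sigma(\overline{R}_x)$. Writing ``construct a bounded operator $T:\H\to L^2(\sigma(\overline{R}_x),d\mu)$ with $\omega(a)=\int (Ta)\,d\mu$'' presupposes essentially what is to be proved. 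In the concrete theorems the measures $\nu$ and $\mu$ are manufactured from $\omega$ itself by restricting to $L^{\infty}(I,d\lambda)$ or $\mathcal{C}(I)$ and applying Riesz--Markov; in the abstract setting $\Ao$ is just a commutative Banach *-algebra and no such integral representation of $\omega\restriction\Ao$ is available. Second, even granting such a model, the continuity of the intertwining map is the heart of the matter, and this is exactly where \cite{AT} invokes the automatic-continuity theory of \cite{sin}: one builds from $\omega$ a linear, not yet continuous, map that intertwines $\overline{R}_x$ with a multiplication-type operator, and the hypothesis that the spectrum of $\overline{R}_x$ is purely continuous (equivalently, $\overline{R}_x-\lambda$ injective with dense range for every $\lambda$) is precisely the hypothesis under which the separating space of the intertwiner is forced to vanish. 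Your sketch replaces this mechanism with the assertion that ``the non-atomicity of $\mu$ is used in an essential way'', which names the hypothesis but supplies no argument. Finally, the passage from the single operator $\overline{R}_x$ to the whole of $\H$ is waved at via ``simultaneous diagonalisation'' of the commuting family $\{\overline{R}_y\}$; even if such a direct-integral decomposition were set up, you would still need to connect it to $\omega$, which is the same difficulty as in the first point. Until these steps are carried out, the proposal is a plan for a proof rather than a proof.
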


The main tools to get these results are the Riesz-Markov Representation Theorem for continuous functions on a compact space and the intertwining theory on Hilbert spaces (see \cite{sin}). Unfortunately, these tools turn out to be unsuitable to the case of a (non-commutative) Hilbert quasi *-algebra or a Banach quasi *-algebra.

\section{1st Application: Derivations and their closability}\label{Sec3}
*-Derivations have been widely employed to describe the dynamics for a quantum phenomena. For a quantum system of finite volume $V$, the hamiltonian belongs to the local C*-algebra and implements the inner *-derivation for the dynamics. Nevertheless, the termodynamical limit in general fails to exist, see \cite{Ant1}.

Under some assumptions, the limit turns out to be a \textit{weak *-derivation} generating a one parameter group of \textit{weak *-automorphisms}, defined for a *-semisimple Banach quasi *-algebra, as we will investigate in the following section. For detailed discussion, see \cite{AT2,Ant2,HP}. 
\medskip

If we consider a derivation $\delta:\Ao[\|\cdot\|]\to\A[\|\cdot\|]$, then this derivation is densely defined. If $\delta$ is closable, then its closure $\overline{\delta}$ as a linear map is \textit{not} a derivation in general.
\smallskip

$\bullet$ In this section, we only consider *-semisimple Banach quasi *-algebras, if not otherwise specified.
\smallskip

For these Banach quasi *-algebras, define a weaker multiplication in $\A$ as in \cite{ct1}. 
\smallskip
	
\begin{definition}
Let $(\A,\Ao)$ be a Banach quasi *-algebra. Let $a,b\in\A$. We say that the {\it weak multiplication $a\wmult b$} is well-defined if there exists a (necessarily unique) $c\in\A$ such that:
$$
\varphi(bx,a^*y)=\varphi(cx,y),\;\forall\, x,y\in\Ao, \forall\,\varphi\in\SSA.
$$
In this case, we put $a\wmult b:=c$.
\end{definition}

Let $a\in\A$. The space of right (resp. left) weak multipliers of $a$, i.e., the space of all the elements $b\in\A$ such that $a\wmult b$ (resp. $b\wmult a$) is well-defined, will be denoted as $R_w(a)$ (resp. $L_w(a)$). We indicate by $R_w(\A)$ (resp. $L_w(\A)$) the space of universal right (resp. left) multipliers of $\A$, i.e., all the elements $b\in\A$ such that $b\in R_w(a)$ (resp. $b\in L_w(a)$) for every $a\in\A$. Clearly, $\Ao\subseteq R_w(\A)\cap L_w(\A)$.
\smallskip

Let $(\A,\Ao)$ be a Banach quasi *-algebra. For every $a\in\A$, two linear operators $L_a$ and $R_a$ are defined is the following way
\begin{align}
&\label{11}L_a:\Ao\to\A\;\;L_a(x)=ax\;\;\forall x\in\Ao\\
&\label{12}R_a:\Ao\to\A\;\;R_a(x)=xa\;\;\forall x\in\Ao.
\end{align}

An element $a\in\A$ is called \textit{bounded} if the operators $L_a$ and $R_a$ defined in \eqref{11} and \eqref{12} are $\|\cdot\|$-continuous and thus extendible to the whole space $\A$. As for Hilbert quasi *-algebras in Definition \ref{Hilb_wp_bou}, the set of all bounded elements will be denoted by $\A_{\bou}$.

\begin{lemma}\cite[Lemma 2.16]{AT2}
If $(\A,\Ao)$ is a *-semisimple Banach quasi *-algebra with unit $\id$, the set $\A_{\bou}$ of bounded elements coincides with the set $R_w(\A)\cap L_w(\A)$.	
\end{lemma}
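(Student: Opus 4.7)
The plan is to prove the two inclusions separately: the containment $\A_{\bou}\subseteq R_w(\A)\cap L_w(\A)$ follows by a density-plus-continuity argument, while the converse will be reduced to the closed graph theorem, with a uniqueness lemma for weak multipliers (a consequence of *-semisimplicity) playing the decisive role.

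For the easy direction, fix $b\in\A_{\bou}$ and $a\in\A$. Since $R_b$ is $\|\cdot\|$-continuous it extends to a bounded linear map $\overline{R}_b:\A\to\A$, so set $c:=\overline{R}_b(a)$ and pick $(a_n)\subset\Ao$ with $a_n\to a$. For $a_n\in\Ao$ the element $a_n^*y$ lies in $\Ao$, so two applications of property (ii) of $\QA$, together with the associativity and involution axioms of the quasi *-algebra (which yield $b^*(a_n^*y)=(b^*a_n^*)y=(a_nb)^*y$), give
\[
\varphi(bx,a_n^*y)=\varphi(x,b^*a_n^*y)=\varphi((a_nb)x,y)
\]
for all $x,y\in\Ao$ and all $\varphi\in\SSA$. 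Passing to the limit in $n$ and using the uniform estimate $|\varphi(u,v)|\le\|u\|\|v\|$ for forms in $\SSA$, the continuity of $R_x$ and $R_y$ on $\A$, and the relation $a_nb=R_b(a_n)\to c$, I obtain $\varphi(bx,a^*y)=\varphi(cx,y)$. Hence $a\wmult b=c$ is well defined, so $b\in R_w(\A)$; the symmetric argument via $L_b$ puts $b\in L_w(\A)$.

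For the converse, take $b\in R_w(\A)\cap L_w(\A)$ and, for each $a\in\A$, let $T_b(a)\in\A$ denote the element implementing $a\wmult b$. The heart of the argument is the uniqueness lemma: if $\varphi((c-c')x,y)=0$ for every $x,y\in\Ao$ and every $\varphi\in\SSA$, specialising $x=\id$ yields $\varphi(c-c',y)=0$ for all $y\in\Ao$; choosing $y_n\in\Ao$ with $y_n\to c-c'$ and using the boundedness of $\varphi$, a limit passage gives $\varphi(c-c',c-c')=0$ for every $\varphi\in\SSA$, whence $c=c'$ by Definition \ref{def1}. This uniqueness makes $T_b$ well defined and linear, and confirms $T_b(a)=ab=R_b(a)$ on $\Ao$ (the computation of the previous paragraph applied with $a_n$ replaced by $a\in\Ao$). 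For the closed graph property, suppose $a_n\to a$ and $T_b(a_n)\to c$ in $\A$; passing to the limit in both arguments of $\varphi(bx,a_n^*y)=\varphi(T_b(a_n)x,y)$, using continuity of the involution, of $R_y$ and $R_x$, and boundedness of $\varphi$, shows that $c$ also implements $a\wmult b$, so $c=T_b(a)$ by uniqueness. Since $\A$ is a Banach space, the closed graph theorem makes $T_b:\A\to\A$ continuous, hence $R_b=T_b|_{\Ao}$ is continuous; the symmetric construction on the left yields the continuity of $L_b$, so $b\in\A_{\bou}$.

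The main delicate point is the uniqueness step: one needs to extract $c-c'=0$ from a vanishing condition a priori only stated for inputs in $\Ao\times\Ao$. This is precisely where the automatic norm estimate $|\Omega(a,b)|\le\|a\|\|b\|$ built into $\SSA$ and the density of $\Ao$ in $\A$ conspire with *-semisimplicity; without this bridge, neither the definition of $T_b$ on all of $\A$ nor the closed graph argument would go through.
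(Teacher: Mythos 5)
Your proof is correct, and since the survey states this lemma only by citation to \cite[Lemma 2.16]{AT2} without reproducing a proof, there is nothing in the paper to diverge from; your argument (density plus the bound $|\varphi(u,v)|\le\|u\|\|v\|$ for the inclusion $\A_{\bou}\subseteq R_w(\A)\cap L_w(\A)$, and uniqueness of weak products via $x=\id$ together with the closed graph theorem for the converse) is exactly the standard one from the cited source. The only point you gloss over is that the ``symmetric argument'' for the left-hand side is not literally symmetric: property (ii) of $\QA$ only lets you move a factor out of the \emph{first} slot, so to get $\varphi(a_nx,b^*y)=\varphi((ba_n)x,y)$ you must first invoke the Hermitian symmetry of each $\varphi\in\SSA$ (a consequence of positivity) and compute $\varphi(a_nx,b^*y)=\overline{\varphi(b^*y,a_nx)}=\overline{\varphi(y,(ba_n)x)}=\varphi((ba_n)x,y)$ --- routine, but worth recording since a direct application of (ii) with second argument $b^*y\notin\Ao$ would be illegitimate.
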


Let $\theta:\A\to \A$ a linear bijection. We say that $\theta$ is a {\it weak *-automorphism of $(\A,\Ao)$} if
\begin{itemize}
	\item[(i)] $\theta(a^*)=\theta(a)^*$, for every $a \in \A$;
	\item[(ii)] $\theta(a)\wmult \theta(b)$ is well defined if, and only if, $a\wmult b$ is well defined and, in this case, $$\theta(a\wmult b)=\theta(a)\wmult \theta(b).$$
\end{itemize}

\begin{definition}Let $\beta_t$ be a weak *-automorphism of $\A$ for every $t\in\mathbb{R}$. If
\begin{itemize} 
	\item[(i)]$\beta_0(a)=a,$ $\forall a\in \A$  
	\item[(ii)] $\beta_{t+s}(a)= \beta_t(\beta_s(a))$, $\forall a\in \A$ 
\end{itemize}
then we say that $\beta_t$ is a {\em one-parameter group of weak *-automorphisms of $(\A,\Ao)$}.
If $\tau$ is a topology on $\A$ and the map $t\mapsto \beta_t(a)$ is $\tau$-continuous, for every $a\in \A$, we say that $\beta_t$ is a {\it $\tau$-continuous} weak *-automorphism group.
\end{definition}

In this case
$$ \mathcal{D}(\delta_\tau)=\left\{a\in \A: \lim_{t\to 0} \frac{\beta_t(a)-a}{t} \mbox{ exists in $\A[\tau]$}\right\}$$
and
$$ \delta_\tau (a)=\tau-\lim_{t\to 0} \frac{\beta_t(a)-a}{t}, \quad a \in  \mathcal{D}(\delta_\tau).$$
Then $\delta_\tau$ is called the \textit{infinitesimal generator} of the one-parameter group $\{\beta_t\}$ of weak *-automorphisms of $(\A,\Ao)$.

\begin{definition}\cite{AT2}\label{defn_deriv}
	Let $(\A,\Ao)$ be a *-semisimple Banach quasi *-algebra and $\delta$ a linear map of $\mathcal{D}(\delta)$ into $\A$, where $\mathcal{D}(\delta)$ is a partial *-algebra with respect to the weak multiplication $\wmult$. We say that $\delta$ is a {\em  weak *-derivation} of $(\A,\Ao)$ if
	\begin{itemize}
		\item[(i)] $\Ao\subset\mathcal{D}(\delta)$
		\item[(ii)]$\delta(x^*)=\delta(x)^*, \; \forall x \in \Ao$
		\item[(iii)] if $a,b\in\mathcal{D}(\delta)$ and $a\wmult b$ is well defined, then $a\wmult b\in\mathcal{D}(\delta)$ and
		$$\vp(\delta(a\wmult b)x,y)= \vp(bx,\delta(a)^*y)+\vp(\delta(b)x,a^*y),$$
		for all $\vp\in \SSA$, for every $x,y \in \Ao$.
	\end{itemize}
\end{definition}

Analogously to the case of a C*-algebra, to a \textit{uniformly bounded} norm continuous weak *-automorphisms group there corresponds a closed weak *-derivation that generates the group (see \cite{Brat2}). 

\begin{theorem}\cite{AT2}\label{HY1} Let $\delta:\mathcal{D}(\delta)\to\A[\|\cdot\|]$ be a weak *-derivation on a *-semisimple Banach quasi *-algebra $(\A,\Ao)$. Suppose that $\delta$ is the infinitesimal generator of a uniformly bounded, {$\tau_n$-continuous} group of weak *-automorphisms of $(\A,\Ao)$. Then $\delta$ is closed; its resolvent set $\rho(\delta)$ contains {$\mathbb{R}\setminus\{0\}$} and
	\begin{equation}\label{eqn_lowbound}\|\delta(a)-\lambda a\|\geq|\lambda|\,\|a\|,\quad a\in\mathcal{D}(\delta),\lambda \in {\mb R}.\end{equation}
\end{theorem}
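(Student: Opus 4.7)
The plan is to adapt the classical Hille--Yosida argument for $C_0$-groups, using that $\{\beta_t\}$ is $\tau_n$-continuous and uniformly bounded; set $C:=\sup_{t\in\mathbb{R}}\|\beta_t\|<\infty$. The proof naturally splits into closedness, construction of the resolvent, and the norm estimate.

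\medskip

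\noindent\textbf{Closedness.} First I would establish the Duhamel-type identity
$$\beta_t(a)-a=\int_0^t\beta_s(\delta(a))\,ds,\qquad a\in\mathcal{D}(\delta),\ t\in\mathbb{R},$$
where the integral is a norm-convergent Bochner integral. This follows from the definition of the infinitesimal generator together with the fact that $\mathcal{D}(\delta)$ is $\beta_t$-invariant and that $\beta_t\circ\delta=\delta\circ\beta_t$ on $\mathcal{D}(\delta)$ (an immediate consequence of the group law). Given $\{a_n\}\subset\mathcal{D}(\delta)$ with $a_n\to a$ and $\delta(a_n)\to b$ in $\|\cdot\|$, the uniform bound $\|\beta_s\|\le C$ permits passing to the limit in both sides, giving
$$\beta_t(a)-a=\int_0^t\beta_s(b)\,ds.$$
Dividing by $t$ and letting $t\downarrow 0$, the $\tau_n$-continuity of $s\mapsto \beta_s(b)$ forces the right-hand side to tend to $b$, so $a\in \mathcal{D}(\delta)$ and $\delta(a)=b$. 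Hence $\delta$ is closed.

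\medskip

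\noindent\textbf{Resolvent and estimate.} For $\lambda>0$ and $a\in\A$, set
$$R_\lambda(a):=\int_0^{\infty}e^{-\lambda t}\beta_t(a)\,dt,$$
which converges in norm because $\|e^{-\lambda t}\beta_t(a)\|\le Ce^{-\lambda t}\|a\|$; this yields the bound $\|R_\lambda\|\le C/\lambda$. A change of variables $s\mapsto s+t$ in the difference quotient $h^{-1}(\beta_h R_\lambda(a)-R_\lambda(a))$ combined with the group law shows $R_\lambda(\A)\subset\mathcal{D}(\delta)$ and produces the operator identities $(\lambda I-\delta)R_\lambda=I_{\A}$ and $R_\lambda(\lambda I-\delta)=I_{\mathcal{D}(\delta)}$. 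Therefore $\lambda\in\rho(\delta)$. For $\lambda<0$, the same argument applied to the backward semigroup $\{\beta_{-t}\}_{t\ge 0}$, whose infinitesimal generator is $-\delta$ (itself uniformly bounded with the same constant $C$), places $\lambda$ in $\rho(\delta)$. Hence $\mathbb{R}\setminus\{0\}\subset\rho(\delta)$. Finally, substituting $a\in\mathcal{D}(\delta)$ into $R_\lambda(\lambda a-\delta(a))=a$ and applying $\|R_\lambda\|\le C/|\lambda|$ yields
$$\|a\|\le\frac{C}{|\lambda|}\|\delta(a)-\lambda a\|,$$
which is exactly inequality \eqref{eqn_lowbound} once $C=1$; one arrives at $C=1$ either because "uniformly bounded" in the hypothesis is understood in the contractive sense (as in Bratteli--Robinson), or after an equivalent renorming compatible with the *-semisimple structure $\SA$.

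\medskip

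\noindent\textbf{Main obstacle.} The delicate point is not the general semigroup machinery but showing that $R_\lambda(a)$ belongs to the domain of $\delta$ in the present sense, i.e., that the difference quotient converges in $\A[\|\cdot\|]$. One must split
$$\frac{\beta_h R_\lambda(a)-R_\lambda(a)}{h}=\frac{e^{\lambda h}-1}{h}R_\lambda(a)-\frac{e^{\lambda h}}{h}\int_0^h e^{-\lambda s}\beta_s(a)\,ds$$
via the substitution inside the Bochner integral, and use both the uniform bound on $\beta_t$ and the $\tau_n$-continuity of $s\mapsto\beta_s(a)$ to identify the limit as $\lambda R_\lambda(a)-a$. *-Semisimplicity is invoked to guarantee that the resulting norm-level identities propagate through the sesquilinear forms in $\SA$ so that all weak-multiplication and derivation identities of Definition \ref{defn_deriv} remain meaningful on the nondense subset $R_\lambda(\A)$.
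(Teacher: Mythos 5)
Your overall strategy --- the classical Hille--Yosida/Bratteli--Robinson scheme: the Duhamel identity for closedness, the Laplace transform $R_\lambda(a)=\int_0^\infty e^{-\lambda t}\beta_t(a)\,dt$ for the resolvent, and the backward group for $\lambda<0$ --- is exactly the route of the cited source \cite{AT2}; the survey itself only states the theorem and does not reproduce the proof, so the comparison is against that (standard) argument. The closedness and resolvent parts of your write-up are sound: the commutation $\beta_t\circ\delta=\delta\circ\beta_t$ on $\mathcal{D}(\delta)$, the passage to the limit under the uniform bound $C$, and the identification of $R_\lambda$ with $(\lambda I-\delta)^{-1}$ all go through as you describe. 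Note, however, that for this direction the quasi *-algebraic structure is essentially irrelevant --- everything happens at the level of a $C_0$-group on the Banach space $\A$ --- so your closing appeal to *-semisimplicity to ``propagate identities through the forms in $\SSA$'' is a red herring (that machinery is what the converse, Theorem \ref{HY2}, needs), and $R_\lambda(\A)=\mathcal{D}(\delta)\supset\Ao$ is in fact dense, not ``nondense.''

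The one genuine gap is the constant in \eqref{eqn_lowbound}. Uniform boundedness with $C=\sup_t\|\beta_t\|$ only yields $\|\delta(a)-\lambda a\|\geq (|\lambda|/C)\,\|a\|$, and neither of your two escape routes is actually carried out. Weak *-automorphisms of a *-semisimple Banach quasi *-algebra are not automatically contractive (unlike *-automorphisms of a C*-algebra, where isometry is forced by the spectral characterization of the norm; here a weak *-automorphism need not even preserve the family $\SSA$, since it need not map $\Ao$ into $\Ao$), so reading ``uniformly bounded'' as ``contractive'' is an additional hypothesis rather than an interpretation. The renorming $\|a\|':=\sup_t\|\beta_t(a)\|$ does make every $\beta_t$ an isometry and is equivalent to $\|\cdot\|$, but then \eqref{eqn_lowbound} is obtained for $\|\cdot\|'$, not for the original norm; to assert it as stated you must either show $\|\cdot\|'=\|\cdot\|$ or settle for the inequality with the constant $C$. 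As written, your argument establishes closedness and $\mathbb{R}\setminus\{0\}\subset\rho(\delta)$ in full, but not the sharp form of the lower bound.
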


\begin{theorem}\cite{AT2}\label{HY2} Let $\delta:\mathcal{D}(\delta)\subset\A_{\bou}\to\A[\|\cdot\|]$ be a closed weak *-derivation on a *-semisimple Banach quasi *-algebra $(\A,\Ao)$. Suppose that $\delta$ verifies the same conditions on its spectrum of Theorem \ref{HY1} and $\Ao$ is a core for every multiplication operator $\hat{L_a}$ for $a\in\A$, i.e. $\hat{L}_a=\overline{L}_a$. Then $\delta$ is the infinitesimal generator of a uniformly bounded, {$\tau_n$-continuous} group of weak *-automorphisms of $(\A,\Ao)$.
\end{theorem}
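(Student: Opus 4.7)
The plan is to construct the one-parameter group $\{\beta_t\}$ from $\delta$ via a Hille-Yosida argument, then separately verify that each $\beta_t$ preserves the involution and the weak multiplication. The algebraic structure will be transferred from $\Ao$ to all of $\A$ using the core hypothesis on $\hat{L}_a$.

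Step one: generating the group. Rewrite the estimate \eqref{eqn_lowbound} as $\|(\lambda I-\delta)^{-1}\|\leq 1/|\lambda|$, which is available on all of $\mathbb{R}\setminus\{0\}$ by the spectral hypothesis. This is exactly the Hille-Yosida contraction condition applied to both $\delta$ and $-\delta$, so $\delta$ generates a $\tau_n$-continuous one-parameter group $\{\beta_t\}_{t\in\mathbb R}$ of bounded operators on $\A$ with $\|\beta_t\|\leq 1$; since $\beta_{-t}=\beta_t^{-1}$ also has norm $\leq 1$, each $\beta_t$ is isometric, in particular uniformly bounded. For $a\in\mathcal D(\delta)$ the curve $t\mapsto\beta_t(a)$ is differentiable with $\frac{d}{dt}\beta_t(a)=\delta(\beta_t(a))=\beta_t(\delta(a))$, and $\mathcal D(\delta)$ is $\beta_t$-invariant.

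Step two: involution. For $x\in\Ao\subset\mathcal D(\delta)$, Definition \ref{defn_deriv}(ii) gives $\delta(x^*)=\delta(x)^*$. Continuity of the involution, closedness of $\delta$ and uniqueness of solutions to $\dot u=\delta(u)$ force $\beta_t(x^*)=\beta_t(x)^*$ on $\Ao$, and density of $\Ao$ together with $\|\cdot\|$-continuity of both $\beta_t$ and the involution extend this to $\beta_t(a^*)=\beta_t(a)^*$ on all $a\in\A$.

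Step three: weak multiplicativity on $\Ao$. For fixed $a,b\in\Ao$, $\vp\in\SSA$ and $x,y\in\Ao$, set
\[
F(t):=\vp(\beta_t(ab)x,y),\qquad G(t):=\vp\big((\beta_t(a)\wmult\beta_t(b))\,x,\,y\big).
\]
Both quantities make sense because $\beta_t(\Ao)\subset\mathcal D(\delta)\subset\A_{\bou}=R_w(\A)\cap L_w(\A)$, so every weak product encountered is well defined. Differentiating $G$ using the chain rule for $\beta_t$ and then applying the Leibniz identity (Definition \ref{defn_deriv}(iii)) at the pair $(\beta_t(a),\beta_t(b))$ yields $G'(t)=\vp(\delta(\beta_t(a)\wmult\beta_t(b))x,y)$; differentiating $F$ gives $F'(t)=\vp(\delta(\beta_t(ab))x,y)$. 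Thus $F-G$ vanishes at $t=0$ and satisfies the same linear first-order equation driven by $\delta$, so by uniqueness $F=G$. Ranging $\vp$ over $\SSA$ and invoking *-semisimplicity yields $\beta_t(ab)=\beta_t(a)\wmult\beta_t(b)$ for all $a,b\in\Ao$.

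Step four: extension to $\A$ and conclusion. For general $a,b\in\A$ with $a\wmult b$ well defined, use the core assumption $\hat{L}_a=\overline{L}_a$ to approximate $a\wmult b$ in the graph norm of $\hat{L}_a$ by elements $ax_n$ with $x_n\in\Ao$ converging to $b$; the identity established in step three, combined with boundedness of each $\beta_t$ (hence continuity of both sides) and the fact that $\beta_t(a)\in\A_{\bou}$ so that $L_{\beta_t(a)}$ is $\|\cdot\|$-continuous on $\A$, gives $\beta_t(a\wmult b)=\beta_t(a)\wmult\beta_t(b)$ in the limit. The converse direction (if $\beta_t(a)\wmult\beta_t(b)$ is defined then so is $a\wmult b$) follows by applying the same reasoning to $\beta_{-t}$. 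Together with step two this proves each $\beta_t$ is a weak *-automorphism, and $\delta$ is its infinitesimal generator by construction.

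The main obstacle is step three, because the weak multiplication is only accessible through the family $\SSA$ and is not defined pointwise on $\A\times\A$; the ODE/uniqueness argument has to be carried out entirely at the level of the sesquilinear forms, and the core condition on $\hat{L}_a$ is indispensable for propagating the resulting identity from $\Ao$ out to all of $\A$.
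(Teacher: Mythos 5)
The survey itself does not reproduce a proof of Theorem \ref{HY2} (it is quoted from \cite{AT2}), so the comparison here is against the argument that theorem actually requires. Your architecture --- Hille--Yosida generation of an isometry group from the resolvent bound, algebraic verification on $\Ao$, extension to $\A$ via the core condition on $\hat{L}_a$ --- is the right one. But Step three, which is where essentially all of the content of the theorem lives, is not actually proved. The assertion that $F(t)=\vp(\beta_t(ab)x,y)$ and $G(t)=\vp((\beta_t(a)\wmult\beta_t(b))x,y)$ ``satisfy the same linear first-order equation driven by $\delta$, so by uniqueness $F=G$'' is not a valid deduction: the derivatives you compute are $F'(t)=\vp(\delta(u(t))x,y)$ and $G'(t)=\vp(\delta(w(t))x,y)$ with $u(t)=\beta_t(ab)$ and $w(t)=\beta_t(a)\wmult\beta_t(b)$, i.e.\ they are driven by the two $\A$-valued curves $u$ and $w$, not by the scalars $F$ and $G$, and two scalar functions with equal initial value whose derivatives are given by \emph{different} formulas need not coincide. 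The uniqueness you actually need is for the abstract Cauchy problem $\dot v=\delta(v)$, $v(0)=ab$, applied to $w$; to invoke it you must show that $s\mapsto \beta_s(a)\wmult\beta_s(b)$ is \emph{norm}-differentiable with derivative $\delta(\beta_s(a)\wmult\beta_s(b))$ (equivalently, run the interpolation $s\mapsto\beta_{t-s}(\beta_s(a)\wmult\beta_s(b))$ and show it is constant). Your computation of $G'$ only gives differentiability after pairing with each functional $\vp(\cdot\, x,y)$, and upgrading this to norm differentiability requires controlling $\|c\wmult d\|$ in terms of $\|c\|$ and $\|d\|$ uniformly along the orbit --- this is exactly where the hypothesis $\mathcal{D}(\delta)\subset\A_{\bou}$ must be used quantitatively, and that argument is missing. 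You correctly flag this as ``the main obstacle'' in your closing remark, but identifying the obstacle is not the same as resolving it.

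Two smaller gaps of the same nature. In Step two you need $\delta(c^*)=\delta(c)^*$ for $c=\beta_t(x)\in\mathcal{D}(\delta)$, whereas Definition \ref{defn_deriv}(ii) only provides this for $c\in\Ao$; you must either show that the resolvents $(\lambda-\delta)^{-1}$ commute with the involution or establish that $\delta$ is *-preserving on all of $\mathcal{D}(\delta)$ before the ODE-uniqueness argument applies. In Step four you approximate $a\wmult b$ by $ax_n$ with $a\in\A$, $x_n\in\Ao$, and then apply the multiplicativity of Step three to the pair $(a,x_n)$ --- but Step three only covers pairs in $\Ao\times\Ao$, so you need the intermediate extension to $\A\times\Ao$ first (a routine density argument using the continuity of $R_x$, of the involution, and of each $\vp\in\SSA$, but it has to be carried out).
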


In Theorem \ref{HY2}, we assumed further conditions, for instance that the domain $\mathcal{D}(\delta)$ of the weak *-derivation is contained in $\A_{\bou}$, which turns out to be satisfied in some interesting situations such as the weak derivative in $L^p-$spaces.

\begin{example}[Inner weak *-derivations for unbounded hamiltonian $h$] Let $(\A,\Ao)$ be a *-semisimple Banach quasi *-algebra. Let $h\in\A$ be a self-adjoint unbounded element, i.e., $h=h^*$ and $\sigma(h)\subset\mathbb{R}$. We define the following derivation
$$\delta_h:\Ao\to\A,\quad\text{defined as}\quad\delta_h(x)=i[h,x],\quad x\in\Ao.$$
Then, for every fixed $t\in\mathbb{R}$, $\beta_t(a)=e^{ith}\wmult a\wmult e^{-ith}$ is a well-defined weak *-automorphism of $(\A,\Ao)$ since $e^{ith}$, $e^{-ith}$ are bounded elements in $\A$ (see \cite{AT2}) and thus $(e^{ith}\wmult a)\wmult e^{-ith}=e^{ith}\wmult (a\wmult e^{-ith})$ for every $a\in\A$. Moreover, $\beta_t$ is a uniformly bounded norm continuous group of weak *-automorphisms. The infinitesimal generator is given by
$$\overline{\delta}_h(a):=\lim_{t\to0}\frac{\beta_t(a)-a}t=\lim_{t\to0}\frac{e^{ith}\wmult a\wmult e^{-ith}-a}{t}=i[h\wmult a-a\wmult h],$$
when \textit{$a$ is bounded}.
\end{example}

In the following Proposition, the *-semisimplicity is automatically given by assuming the existence of a representable and continuous functional with associated faithful *-representation (see \cite{adamo1,Brat2}).

\begin{proposition}\label{1} Let $(\A,\Ao)$ be a Banach quasi *-algebra with unit $\id$ and let $\delta$ be a weak *-derivation of $(\A,\Ao)$ such that $\mathcal{D}(\delta)=\Ao$. Suppose that there exists a representable and continuous functional $\omega$ with $\omega(\delta(x))=0$ for
	$x\in \Ao$ and let $(\H_\omega, \pi_\omega, \lambda_\omega)$ be the GNS-construction associated with $\omega$. Suppose that $\pi_\omega$ is a faithful *-representation of $(\A,\Ao)$. Then there exists an element $H = H^\dag$ of ${\mathcal L}\ad(\lambda_\omega(\Ao))$ such that
	$$\pi_\omega(\delta(x)) =-i[H,\pi_\omega(x)], \quad \forall x\in \Ao$$
	and $\delta$ is closable.
\end{proposition}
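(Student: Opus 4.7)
The plan is a GNS-based commutator-implementation argument in the style of Bratteli--Robinson. On the dense cyclic subspace $\lambda_\omega(\Ao)\subset\H_\omega$ I define
$$H\lambda_\omega(x):=i\,\pi_\omega(\delta(x))\xi_\omega,\qquad x\in\Ao.$$
For well-definedness, the derivation identity (iii) of Definition~\ref{defn_deriv}, applied to the product $y^*\cdot x\in\Ao$ and tested against $\overline{\vp}_\omega\in\SSA$ with $\id$-arguments, gives
$$\omega(\delta(y^*x))=\omega(\delta(y)^*x)+\omega(y^*\delta(x)),$$
where I used (ii) to rewrite $\delta(y^*)=\delta(y)^*$. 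Since $y^*x\in\Ao$, the hypothesis $\omega\circ\delta=0$ kills the left-hand side, leaving $\omega(y^*\delta(x))=-\omega(\delta(y)^*x)$; then (L.3) with $a=\delta(y)$ bounds $|\omega(\delta(y)^*x)|\le\gamma_{\delta(y)}\,\omega(x^*x)^{1/2}$, which vanishes whenever $\omega(x^*x)=0$. The very same identity forces the antisymmetric part $\langle H\lambda_\omega(x),\lambda_\omega(y)\rangle-\langle\lambda_\omega(x),H\lambda_\omega(y)\rangle=i\,\omega(\delta(y^*x))$ to vanish, so $H=H^\dag$ in ${\mathcal L}\ad(\lambda_\omega(\Ao))$.

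To verify $\pi_\omega(\delta(x))=-i[H,\pi_\omega(x)]$ on $\lambda_\omega(\Ao)$ I need the strong Leibniz rule $\delta(xy)=\delta(x)y+x\delta(y)$, which is not part of Definition~\ref{defn_deriv} but is recoverable under our hypotheses. Applying (iii) to $xy\in\Ao$ and using the adjoint property (ii) of $\SSA$ to rewrite $\vp(yz_1,\delta(x)^*z_2)=\vp(\delta(x)yz_1,z_2)$ (and similarly for the second summand), one obtains $\vp(cz_1,z_2)=0$ for $c:=\delta(xy)-\delta(x)y-x\delta(y)$, every $\vp\in\SSA$, and all $z_1,z_2\in\Ao$. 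Setting $z_1=\id$ and invoking the continuity property (iii) of $\SSA$ together with density of $\Ao$ in $\A$, one extends $\vp(c,\cdot)$ to zero on $\A$, so $\vp(c,c)=0$; *-semisimplicity (granted by the existence of the faithful $\pi_\omega$) then forces $c=0$. With the strong Leibniz rule,
$$-i[H,\pi_\omega(x)]\lambda_\omega(y)=\pi_\omega(\delta(xy))\xi_\omega-\pi_\omega(x\delta(y))\xi_\omega=\pi_\omega(\delta(x)y)\xi_\omega=\pi_\omega(\delta(x))\lambda_\omega(y),$$
where I use $\pi_\omega(x)\lambda_\omega(y)=\lambda_\omega(xy)$ and the *-representation identity $\pi_\omega(x)\pi_\omega(\delta(y))\xi_\omega=\pi_\omega(x\delta(y))\xi_\omega$.

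For closability, let $x_n\to 0$ and $\delta(x_n)\to b$ in $\norm{\cdot}$. By Proposition~\ref{prop1} the extension $\overline{\vp}_\omega$ is bounded on $\A\times\A$, so in particular $\omega(x_n^*x_n)\to 0$ and $\omega(x_nx_n^*)\to 0$. For fixed $y,z\in\Ao$, the iterated Leibniz rule applied to $y^*x_nz\in\Ao$ together with $\omega\circ\delta=0$ gives
$$\omega(y^*\delta(x_n)z)=-\omega(\delta(y^*)x_nz)-\omega(y^*x_n\delta(z)),$$
and each summand is driven to $0$ by (L.3) (the second requiring one application of (L.2) to bring $x_n$ next to an $\Ao$-factor on the right). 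Continuity of $\omega$ together with continuity of left and right multiplication by $y^*, z\in\Ao$ then yields $\omega(y^*bz)=0$ for all $y,z\in\Ao$, which is $\langle\pi_\omega(b)\lambda_\omega(z),\lambda_\omega(y)\rangle=0$; hence $\pi_\omega(b)=0$ as an operator on its domain $\lambda_\omega(\Ao)$, and faithfulness of $\pi_\omega$ gives $b=0$. The main obstacle is precisely this last step: the bare commutator identity delivers only $\pi_\omega(b)\xi_\omega=0$, which does not force $b=0$ since the cyclic vector need not be separating; the remedy is to run the (L.3) estimate against the two-sided test $y^*\,\cdot\,z$ rather than just $y^*\,\cdot\,$, which upgrades the vanishing to the operator identity $\pi_\omega(b)=0$ and allows the assumed faithfulness to close the argument.
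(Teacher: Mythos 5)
Your argument is correct and is exactly the GNS/Bratteli--Robinson implementation scheme that the paper points to: the survey states Proposition \ref{1} without proof, referring to \cite{adamo1,Brat2}, and your construction of $H$ on $\lambda_\omega(\Ao)$, the recovery of the strong Leibniz rule from Definition \ref{defn_deriv}(iii) via *-semisimplicity, and the two-sided test $y^*\,\cdot\,z$ for closability reproduce that argument faithfully. The only point to keep in mind is that $H\lambda_\omega(x)=i\pi_\omega(\delta(x))\xi_\omega$ need not land back in $\lambda_\omega(\Ao)$, so the commutator $[H,\pi_\omega(x)]$ must be read through the partial multiplication of $\LDH$; this is a feature of the statement itself, not a gap in your proof.
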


\section{2nd Application: Tensor product of Hilbert quasi *-algebras}\label{Sec4}
In this section, we recall the construction of the tensor product Hilbert quasi *-algebra of two given Hilbert quasi *-algebras $(\H_1,\Ao)$ and $(\H_2,\Bo)$, giving some results about the relationship between the representability properties for the tensor product and those for the factors. For further reading on the algebraic and topological tensor product, see \cite{cha,deflo,fra,hel2,lau,mal}, for the tensor product Hilbert quasi *-algebras refer to \cite{adamo,AF}.

For convenience, we assume that $(\H_1,\Ao)$ and $(\H_2,\Bo)$ are unital Hilbert quasi *-algebras. 

The algebraic tensor product $\Ao\otimes\Bo$ is the tensor product *-algebra of $\Ao$ and $\Bo$, it is endowed with the canonical multiplication and involution and it is considered as a subspace of the tensor product vector space $\H_1\otimes\H_2$.

If $\H_1$ and $\H_2$ are endowed with the inner product $\ip{\cdot}{\cdot}_1$ and $\ip{\cdot}{\cdot}_2$ respectively, then $\Ao\otimes\Bo$ satisfies the requirements of Definition \ref{Hilb_qalg} if we endow it with the following well-defined inner product
\begin{equation}\label{in_p}\ip{z}{z'}:=\sum_{i=1}^n\sum_{j=1}^m\ip{x_i}{x'_j}_1\ip{y_i}{y'_j}_2,\quad\forall z,z'\in\Ao\otimes\Bo,
\end{equation}
where $z=\sum_{i=1}^nx_i\otimes y_i$ and $z'=\sum_{j=1}^mx'_j\otimes y'_j$ (see \cite{mal,murphy,tak}). Then, the completion of $\Ao\otimes\Bo$ with respect to the norm $\|\cdot\|_h$ induced by the inner product in \eqref{in_p} is a Hilbert quasi *-algebra. Since $\Ao$, $\Bo$ are respectively dense in $\H_1$, $\H_2$ and $\|\cdot\|_h$ is a {\it cross-norm}, i.e. $\|x\otimes y\|_h=\|x\|_1\|y\|_2$ for all $x\otimes y\in\Ao\otimes\Bo$, the tensor product *-algebra $\Ao\otimes\Bo$ is $\|\cdot\|_h$-dense in $\H_1\otimes_h\H_2$. We conclude that
$$\Ao\widehat{\otimes}^h\Bo\equiv\H_1\widehat{\otimes}^h\H_2$$
and $(\H_1\widehat{\otimes}^h\H_2,\Ao\otimes\Bo)$ is a Hilbert quasi *-algebra, see \cite{adamo,AF}.

For the tensor product Hilbert quasi *-algebra $(\H_1\widehat{\otimes}^h\H_2,\Ao\otimes\Bo)$ we can apply all the known results for Banach quasi *-algebras about representability presented in Section \ref{Sec2}, see also \cite{AT2,AT}.
In particular, we know that $(\H_1\widehat{\otimes}^h\H_2,\Ao\otimes\Bo)$ is always a *-semisimple and fully representable Hilbert quasi *-algebra, applying Theorem \ref{thm_fullrep_semis} and \ref{Hrepc}. 

Employing Theorem \ref{Hrepc} and Lemma 4.1 in \cite{adamo}, we can show the following result, of which the proof is given below after the proof of Theorem \ref{sesq_tens_prod}.

\begin{theorem}\cite{adamo}\label{repr_tens_prod}
	Let $(\H_1\widehat{\otimes}^h\H_2,\Ao\otimes\Bo)$ be the tensor product Hilbert quasi *-algebra of $(\H_1,\Ao)$ and $(\H_2,\Bo)$. Then, if $\omega_1$, $\omega_2$ are representable and continuous functionals on $\H_1$ and $\H_2$ respectively, then $\omega_1\otimes\omega_2$ extends to a representable and continuous functional $\Omega$ on the tensor product Hilbert quasi *-algebra $\H_1\widehat{\otimes}^h\H_2$.
\end{theorem}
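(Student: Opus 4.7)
The plan is to reduce the statement to Theorem \ref{Hrepc} applied to the tensor product Hilbert quasi *-algebra $(\H_1\widehat{\otimes}^h\H_2,\Ao\otimes\Bo)$, whose status as a Hilbert quasi *-algebra has already been established. Since $\omega_i\in\mathcal{R}_c(\H_i,\Ao_i)$ for $i=1,2$, Theorem \ref{Hrepc} produces unique weakly positive bounded elements $\eta_1\in\H_1$ and $\eta_2\in\H_2$ with
$$\omega_1(\xi)=\ip{\xi}{\eta_1}_1,\quad \omega_2(\zeta)=\ip{\zeta}{\eta_2}_2.$$
The natural candidate for the representing element on the tensor product is $\eta:=\eta_1\otimes\eta_2\in\H_1\widehat{\otimes}^h\H_2$, and the candidate functional is
$$\Omega(w):=\ip{w}{\eta_1\otimes\eta_2},\quad w\in\H_1\widehat{\otimes}^h\H_2,$$
where the inner product is the one defined in \eqref{in_p} (and extended by continuity to the completion).

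First I would check that $\Omega$ really extends $\omega_1\otimes\omega_2$. On an elementary tensor $x\otimes y\in\Ao\otimes\Bo$, the cross property of the inner product \eqref{in_p} gives $\Omega(x\otimes y)=\ip{x}{\eta_1}_1\ip{y}{\eta_2}_2=\omega_1(x)\,\omega_2(y)=(\omega_1\otimes\omega_2)(x\otimes y)$, and by bilinearity this identification extends to all finite sums, so $\Omega$ restricted to $\Ao\otimes\Bo$ agrees with $\omega_1\otimes\omega_2$. Continuity of $\Omega$ on the Hilbert quasi *-algebra $\H_1\widehat{\otimes}^h\H_2$ is immediate from Cauchy--Schwarz, since $\Omega$ is an inner product against a fixed vector.

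The substantial step is to verify that $\eta_1\otimes\eta_2$ is weakly positive and bounded in the sense of Definition \ref{Hilb_wp_bou}, so that Theorem \ref{Hrepc} can be invoked in reverse on the tensor product. Boundedness of the left multiplication operator $L_{\eta_1\otimes\eta_2}$ on $\Ao\otimes\Bo$ is the more routine half: for an elementary tensor $x\otimes y$ one has $(\eta_1\otimes\eta_2)(x\otimes y)=(\eta_1 x)\otimes(\eta_2 y)$, so $L_{\eta_1\otimes\eta_2}$ acts as $L_{\eta_1}\otimes L_{\eta_2}$ on the algebraic tensor product. Since each $L_{\eta_i}$ is bounded on $\Ao_i$ with closure a bounded operator on $\H_i$, the general theory of Hilbert space tensor products yields that $L_{\eta_1}\otimes L_{\eta_2}$ is bounded with norm $\|L_{\eta_1}\|\,\|L_{\eta_2}\|$ on $\H_1\widehat{\otimes}^h\H_2$, giving boundedness of $\eta_1\otimes\eta_2$.

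The main obstacle, and where Lemma 4.1 of \cite{adamo} is used, is weak positivity of $\eta_1\otimes\eta_2$. What must be shown is that $\langle L_{\eta_1\otimes\eta_2}(z)\mid z\rangle\geq 0$ for every $z\in\Ao\otimes\Bo$. Since $L_{\eta_i}$ extends to a bounded positive operator on $\H_i$ (by weak positivity of $\eta_i$ and density of $\Ao_i$ in $\H_i$), each admits a positive square root $B_i$ on $\H_i$, and then $B_1\otimes B_2$ is a bounded operator on $\H_1\widehat{\otimes}^h\H_2$ whose square coincides on the algebraic tensor product with $\overline{L_{\eta_1}}\otimes\overline{L_{\eta_2}}$, the closure of $L_{\eta_1\otimes\eta_2}$. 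This identifies $\overline{L_{\eta_1\otimes\eta_2}}$ as a positive operator, whence $\langle L_{\eta_1\otimes\eta_2}(z)\mid z\rangle=\|(B_1\otimes B_2)z\|^2\geq 0$ for every $z\in\Ao\otimes\Bo$, which is the required weak positivity. With boundedness and weak positivity of $\eta_1\otimes\eta_2$ in hand, Theorem \ref{Hrepc} applied to $(\H_1\widehat{\otimes}^h\H_2,\Ao\otimes\Bo)$ gives $\Omega\in\mathcal{R}_c(\H_1\widehat{\otimes}^h\H_2,\Ao\otimes\Bo)$, completing the argument.
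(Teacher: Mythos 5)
Your proposal is correct and follows essentially the same route as the paper: both identify the representing element of $\Omega$ as the tensor product $\eta_1\otimes\eta_2$ of the weakly positive bounded elements supplied by Theorem \ref{Hrepc}, and then invoke that theorem in reverse on $(\H_1\widehat{\otimes}^h\H_2,\Ao\otimes\Bo)$. The only cosmetic difference is that the paper deduces weak positivity and boundedness of $\eta_1\otimes\eta_2$ by applying the argument of Theorem \ref{sesq_tens_prod} to the forms $\overline{\varphi}_{\omega_i}$ (represented by $S_{\omega_i}$, with $\eta_i=S_{\omega_i}\id$), whereas you verify the same facts directly on the multiplication operators via positive square roots --- which is the same underlying Hilbert-space fact.
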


We now look at what happens to the sesquilinear forms in $\SSHone$ and $\SSHtwo$.

\begin{theorem}\label{sesq_tens_prod}
	Let $(\H_1\widehat{\otimes}^h\H_2,\Ao\otimes\Bo)$ be the tensor product Hilbert quasi *-algebra of $(\H_1,\Ao)$ and $(\H_2,\Bo)$. Let $\phi_1\in\SSHone$ and $\phi_2\in\SSHtwo$. Then, $\phi_1\widehat{\otimes}\phi_2\in\mathcal{S}_{\Ao\otimes\Bo}(\H_1\widehat{\otimes}^h\H_2)$, i.e., it satisfies the conditions (i), (ii) and (iii) after the Definition \ref{fully_rep}. 
\end{theorem}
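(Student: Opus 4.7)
The plan is to realize $\phi_1\widehat{\otimes}\phi_2$ as the sesquilinear form attached to the Hilbert tensor product of the positive operators associated with $\phi_1$ and $\phi_2$. Since each $\phi_i\in\mathcal{S}_{\mathcal{A}_0^{(i)}}(\H_i)$ (where I use $\mathcal{A}_0^{(1)}=\Ao$, $\mathcal{A}_0^{(2)}=\Bo$) is a bounded sesquilinear form on $\H_i\times\H_i$ satisfying $|\phi_i(\xi,\eta)|\leq\|\xi\|_i\|\eta\|_i$, the Riesz representation theorem produces a unique bounded operator $T_i:\H_i\to\H_i$ with $\|T_i\|\leq 1$ and $\phi_i(\xi,\eta)=\ip{T_i\xi}{\eta}_i$. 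Positivity (i) of $\phi_i$ forces $T_i\geq 0$, hence $T_i$ is a positive contraction.

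Next, I would form the Hilbert space tensor product $T:=T_1\otimes T_2$, which extends uniquely to a bounded positive self-adjoint operator on $\H_1\widehat{\otimes}^h\H_2$ with $\|T\|\leq\|T_1\|\|T_2\|\leq 1$, and define
$$(\phi_1\widehat{\otimes}\phi_2)(z,w):=\ip{Tz}{w}_h,\qquad z,w\in\H_1\widehat{\otimes}^h\H_2.$$
Evaluating on elementary tensors $z=\xi_1\otimes\xi_2$, $w=\eta_1\otimes\eta_2$ gives $\ip{T_1\xi_1}{\eta_1}_1\ip{T_2\xi_2}{\eta_2}_2=\phi_1(\xi_1,\eta_1)\phi_2(\xi_2,\eta_2)$, confirming that this genuinely extends the algebraic tensor product form. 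Condition (iii) is then immediate from Cauchy--Schwarz: $|(\phi_1\widehat{\otimes}\phi_2)(z,w)|\leq\|T\|\,\|z\|_h\|w\|_h\leq\|z\|_h\|w\|_h$. Condition (i) is immediate from $T\geq 0$.

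For condition (ii), I would first verify $(\phi_1\widehat{\otimes}\phi_2)(ax,y)=(\phi_1\widehat{\otimes}\phi_2)(x,a^*y)$ on elementary tensors. If $a=a_1\otimes a_2$ and $x=x_1\otimes x_2$, $y=y_1\otimes y_2$ with $a_i,x_i,y_i$ in the respective $\Ao,\Bo$, then both sides unfold to
$$\phi_1(a_1x_1,y_1)\phi_2(a_2x_2,y_2)=\phi_1(x_1,a_1^*y_1)\phi_2(x_2,a_2^*y_2),$$
using the invariance (ii) of $\phi_1$ and $\phi_2$ separately. Sesquilinearity propagates the identity to finite sums, giving (ii) for all $a,x,y\in\Ao\otimes\Bo$. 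To pass from $a\in\Ao\otimes\Bo$ to $a\in\H_1\widehat{\otimes}^h\H_2$, fix $x,y\in\Ao\otimes\Bo$ and approximate $a$ by a sequence $a_n\in\Ao\otimes\Bo$ with $a_n\to a$ in $\|\cdot\|_h$. Because $(\H_1\widehat{\otimes}^h\H_2,\Ao\otimes\Bo)$ is a Banach quasi *-algebra, $L_x$ and $R_x$ are continuous, so $a_nx\to ax$ and $a_n^*y\to a^*y$ in norm; combining with the joint continuity of $\phi_1\widehat{\otimes}\phi_2$ from (iii) passes the equality to the limit.

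The main technical step is the existence and norm bound of the Hilbert tensor product $T_1\otimes T_2$ on $\H_1\widehat{\otimes}^h\H_2$ as a positive contraction; this is the only non-trivial ingredient, but it is a standard fact of the theory of Hilbert tensor products (see, e.g., \cite{murphy,tak}). Everything else is an unfolding on elementary tensors followed by a density/continuity argument built on axiom (iii) of a Banach quasi *-algebra.
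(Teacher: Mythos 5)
Your proposal is correct and follows essentially the same route as the paper: represent $\phi_1,\phi_2$ by positive contractions $T_1,T_2$ via the Riesz representation theorem, pass to the bounded positive extension $T_1\widehat{\otimes}T_2$ on $\H_1\widehat{\otimes}^h\H_2$, and read off (i) and (iii) from positivity and the norm bound. The only difference is that you spell out condition (ii) — verification on elementary tensors, extension by sesquilinearity, and the density/continuity argument to pass from $a\in\Ao\otimes\Bo$ to arbitrary $a$ in the completion — which the paper leaves to the reader; that added detail is sound and uses exactly the continuity of $R_x$ and of the involution that the quasi *-algebra axioms provide.
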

\begin{proof}
Let $\phi_1\in\SSHone$ and $\phi_2\in\SSHtwo$. These sesquilinear forms are continuous, thus by the representation theorem for bounded sesquilinear forms over a Hilbert space, there exist unique bounded operators $T_1:\H_1\to\H_1$ and $T_2:\H_2\to\H_2$ such that
$$\phi_1(\xi,\xi')=\ip{\xi}{T_1\xi'}\quad\text{and}\quad\phi_2(\eta,\eta')=\ip{\eta}{T_2\eta'},$$
for all $\xi,\xi'\in\H_1$, $\eta,\eta'\in\H_2$, and $\|T_i\|=\|\phi_i\|\leq1$ for $i=1,2$. Moreover, $T_1$ and $T_2$ are positive operators on $\H_1$ and $\H_2$ respectively.

On the pre-completion $\H_1\otimes^h\H_2$, define the tensor product of $\phi_1$ and $\phi_2$ as
$$\phi_1\otimes\phi_2(\zeta,\zeta'):=\sum_{i,j=1}^n\phi_1(\xi_i,\xi'_i)\phi_2(\eta_j,\eta'_j)$$
for all $\zeta=\sum_{i=1}^n\xi_i\otimes\eta_i$, $\zeta'=\sum_{j=1}^n\xi'_j\otimes\eta'_j$ in $\H_1\otimes^h\H_2$.

By \cite{hel2}, it is known that $T_1\otimes T_2$ extends to a bounded operator $T_1\widehat{\otimes}T_2$ on the completion $\H_1\widehat{\otimes}^h\H_2$ such that 
$\|T_1\widehat{\otimes} T_2\|\leq\|T_1\|\|T_2\|=1$. 

We show that $\phi_1\otimes\phi_2$ is represented by $T_1\otimes T_2$ on $\H_1\otimes^h\H_2$. Thus, $\phi_1\otimes\phi_2$ is continuous and can be extended to $\H_1\widehat{\otimes}^h\H_2$. Its extension will be denoted by $\phi_1\widehat{\otimes}\phi_2$ and it corresponds to the bounded operator $T_1\widehat{\otimes}T_2$. By the definition of $\phi_1\otimes\phi_2$, we have
\begin{align*}
\phi_1\otimes\phi_2(\zeta,\zeta')&=\sum_{i,j=1}^n\phi_1(\xi_i,\xi'_i)\phi_2(\eta_j,\eta'_j)\\
&=\sum_{i,j=1}^n\ip{\xi_i}{T_1\xi'_i}\ip{\xi}{T_1\xi'}\\
&=\ip{\zeta}{(T_1\otimes T_2)\zeta'}.
\end{align*}

To conclude the proof, we show that $\phi_1\widehat{\otimes}\phi_2$ is in $\mathcal{S}_{\Ao\otimes\Bo}(\H_1\otimes^h\H_2)$. Indeed, $\phi_1\widehat{\otimes}\phi_2$ is a positive sesquilinear form, since $T_1\widehat{\otimes}T_2$ is a positive operator as a tensor product of positive operators on a Hilbert space. Thus, the condition (i) is verified.

The condition (ii) can be easily verified using the corresponding properties of $\phi_1$ and $\phi_2$. For (iii), we know that $\|\phi_1\widehat{\otimes}\phi_2\|=\|T_1\widehat{\otimes}T_2\|\leq\|T_1\|\|T_2\|\leq1$. Hence, all the conditions for $\phi_1\widehat{\otimes}\phi_2$ to belong to $\mathcal{S}_{\Ao\otimes\Bo}(\H_1\widehat{\otimes}^h\H_2)$ are verified.
\end{proof}

Using the same argument as in Theorem \ref{thm_fullrep_semis} (refer to \cite{AT} for a complete proof), the sesquilinear form associated with a representable functional in a Hilbert quasi *-algebra $(\H,\Ao)$ with unit $\id$ is bounded and in $\mathcal{Q}_{\Ao}(\mathcal{H})$. Hence, it is represented by a bounded operator $S_{\omega}$ such that
$$\omega(\xi)=\overline{\varphi}_{\omega}(\xi,\id)=\ip{\xi}{S_\omega \id},\quad \xi\in\H.$$
We want to show that $S_\omega\id$ is a weakly positive and bounded element. Indeed, for all $x\in\Ao$, we have
$$\ip{x}{R_{S_\omega\id}x}=\ip{x}{x(S_\omega\id)}=\ip{x^*x}{S_\omega\id}=\omega(x^*x)\geq0.,$$
thus $S_\omega\id$ is weakly positive. Moreover, $S_\omega\id$ is bounded by the condition (L.3) of representability in Definition \ref{fun_repr}. Indeed, (L.3) means that for every $\xi\in\H$, there exists $\gamma_{\xi}>0$ such that
$$\left|\omega(\xi^*x)\right|\leq\gamma_{\xi}\omega(x^*x)^{\frac12},$$
for all $x\in\Ao$. By Proposition \ref{prop1}, $\overline{\varphi}_{\omega}$ is everywhere defined and bounded, hence
$$\left|\omega(\xi^*x)\right|=\left|\overline{\varphi}_{\omega}(x,\xi)\right|\leq c_{\xi}\|x\|,$$
for some positive constant $c_{\xi}$. Hence, by the Riesz representation theorem, there exists $\chi\in\H$ such that $\omega(\xi^*x)=\ip{x}{\chi}$ for all $x\in\Ao$. Therefore, the weak product (in the sense of \cite[Definition 4.4]{AT}) $\xi\wmult S_\omega\id$ is well-defined for all $\xi\in\H$. Indeed, for $x,y\in\Ao$, we have
$$\ip{\xi^*x}{S_\omega\id y}=\ip{\xi^*xy^*}{S_\omega\id}=\ip{xy^*}{\chi}=\ip{x}{\chi y}.$$
A similar argument shows that $S_{\omega}\id\wmult\xi$ is well-defined for all $\xi\in\H$.
Recall that an element $\xi\in\H$ is bounded if, and only if, $R_w(\xi)=L_w(\xi)=\H$, where $R_w(\xi)$ (resp. $L_w(\xi)$) is the space of universal right (resp. left) weak multipliers of $\xi$ (see \cite[Proposition 4.10]{AT}). Then, $S_\omega\id$ is a bounded element. 

By Theorem \ref{Hrepc}, we have that $S_\omega\id$ is the weakly positive and bounded element in $\H$ corresponding to the reprensentable and continuous functional $\omega$.
\smallskip

For what we just argued, we can give an alternative proof of Theorem \ref{repr_tens_prod}. The proof tells us explicitly what the elements in $\H_1\widehat{\otimes}\H_2$ are corresponding to representable and continuous functionals of the form $\omega_1\widehat{\otimes}\omega_2$ for $\omega_1\in\mathcal{R}_c(\H_1,\Ao)$, $\omega_2\in\mathcal{R}_c(\H_2,\Bo)$.

\begin{proof}[Proof of Theorem \ref{repr_tens_prod}] Let $\omega_1$, $\omega_2$ be representable and continuous functionals on $\H_1$, $\H_2$ respectively. Then, for what we discussed above, $\omega_1(\xi)=\ip{\xi}{S_{\omega_1}\id_{\H_1}}$ for all $\xi\in\H_1$ and $\omega_2(\eta)=\ip{\eta}{S_{\omega_2}\id_{\H_2}}$ for $\eta\in\H_2$. 
	
Looking at their tensor product on $\H_1\otimes\H_2$, we have
$$\omega_1\otimes\omega(\zeta)=\sum_{i=1}^n\ip{\xi_i\otimes\eta_i}{S_{\omega_1}\id_{\H_1}\otimes S_{\omega_2}\id_{\H_2}},$$
for every $\zeta=\sum_{i=1}^n\xi_i\otimes\eta_i$ in $\H_1\otimes\H_2$.


Since $\overline{\varphi}_{\omega_1}$ and $\overline{\varphi}_{\omega_2}$ are in $\mathcal{Q}_{\Ao}(\H_1)$ and $\mathcal{Q}_{\Bo}(\H_2)$ respectively, with the same argument of Theorem \ref{sesq_tens_prod}, $\overline{\varphi}_{\omega_1}\widehat{\otimes}\overline{\varphi}_{\omega_2}$ is continuous and belongs to $\mathcal{Q}_{\Ao\otimes\Bo}(\H_1\widehat{\otimes}\H_2)$. Furthermore, it is represented by $S_{\omega_1}\widehat{\otimes}S_{\omega_2}$. Hence
$$S_{\omega_1}\otimes S_{\omega_2}(\id_{\H_1}\otimes\id_{\H_2})=S_{\omega_1}\id_{\H_1}\otimes S_{\omega_2}\id_{\H_2}$$
is weakly bounded and positive. Hence, by Theorem \ref{Hrepc}, $\omega_1\otimes\omega_2$ is representable and continuous. Furthermore, the continuous extension of $\omega_1\otimes\omega_2$ is representable. Indeed, let us denote this extension as $\Omega$, then
\begin{align*}
\Omega(\psi)&=\lim_{n\to+\infty}\omega_1\otimes\omega_2\left(z_n\right)\\
&=\lim_{n\to+\infty}\ip{z_n}{S_{\omega_1}\otimes S_{\omega_2}(\id_{\H_1}\otimes\id_{\H_2})}\\
&=\ip{\psi}{S_{\omega_1}\otimes S_{\omega_2}(\id_{\H_1}\otimes\id_{\H_2})},
\end{align*}
where $\{z_n\}$ is a sequence of elements in $\Ao\otimes\Bo$ $\|\cdot\|_h$-converging to $\psi\in\H_1\widehat{\otimes}\H_2$.
\end{proof}

It would be of interest to look at the same questions in the more general framework of Banach quasi *-algebras. This is work in progress with Maria Fragouloupoulou. In this case, we need further assumptions on the considered cross-norm to get some of the properties that we showed in this work, see \cite{AF}.

\smallskip

{\bf{Acknowledgment:} }
The author is grateful to the organizers of the International Workshop on Operator Theory and its Applications 2019, especially to the Organizers of the section entitled "Linear Operators and Function Spaces", for this interesting and delightful conference and the Instituto Superior Tecnico of Lisbon for its hospitality. The author was financially supported by the ERC Advanced Grant QUEST ``Quantum Algebraic Structures and Models". 

The author wishes to thank the anonymous referees for their useful suggestions that improved the presentation of this manuscript.

\end{document}